\begin{document}
%
    
\newtheorem{theorem}{Theorem}
\newtheorem{lemma}[theorem]{Lemma}
\newtheorem{example}[theorem]{Example}
\newtheorem{algol}{Algorithm}
\newtheorem{corollary}[theorem]{Corollary}
\newtheorem{prop}[theorem]{Proposition}
\newtheorem{proposition}[theorem]{Proposition}
\newtheorem{problem}[theorem]{Problem}
\newtheorem{conj}[theorem]{Conjecture}
\newtheorem{cor}[theorem]{Corollary}

\newtheorem{definition}[theorem]{Definition}
\newtheorem{question}[theorem]{Question}
\newtheorem{remark}[theorem]{Remark}
\newtheorem*{acknowledgement}{Acknowledgements}

\theoremstyle{remark}

\numberwithin{equation}{section}
\numberwithin{theorem}{section}
\numberwithin{table}{section}
\numberwithin{figure}{section}

\allowdisplaybreaks

\definecolor{olive}{rgb}{0.3, 0.4, .1}
\definecolor{dgreen}{rgb}{0.,0.5,0.}

\def\cc#1{\textcolor{red}{#1}} 

\definecolor{dgreen}{rgb}{0.,0.6,0.}
\def\tgreen#1{\begin{color}{dgreen}{\it{#1}}\end{color}}
\def\tblue#1{\begin{color}{blue}{\it{#1}}\end{color}}
\def\tred#1{\begin{color}{red}#1\end{color}}
\def\tmagenta#1{\begin{color}{magenta}{\it{#1}}\end{color}}
\def\tNavyBlue#1{\begin{color}{NavyBlue}{\it{#1}}\end{color}}
\def\tMaroon#1{\begin{color}{Maroon}{\it{#1}}\end{color}}

%


 \def\mand{\qquad\mbox{and}\qquad}

\def\cA{{\mathcal A}}
\def\cB{{\mathcal B}}
\def\cC{{\mathcal C}}
\def\cD{{\mathcal D}}
\def\cF{{\mathcal F}}
\def\cG{{\mathcal G}}
\def\cH{{\mathcal H}}
\def\cI{{\mathcal I}}
\def\cJ{{\mathcal J}}
\def\cK{{\mathcal K}}
\def\cL{{\mathcal L}}
\def\cM{{\mathcal M}}
\def\cN{{\mathcal N}}
\def\cO{{\mathcal O}}
\def\cP{{\mathcal P}}
\def\cQ{{\mathcal Q}}
\def\cR{{\mathcal R}}
\def\cS{{\mathcal S}}
\def\cT{{\mathcal T}}
\def\cU{{\mathcal U}}
\def\cV{{\mathcal V}}
\def\cW{{\mathcal W}}
\def\cX{{\mathcal X}}
\def\cY{{\mathcal Y}}
\def\cZ{{\mathcal Z}}

\def \sL {\mathscr{L}}
\def \sM {\mathscr{M}}

\def\C{\mathbb{C}}
\def\F{\mathbb{F}}
\def\K{\mathbb{K}}
\def\G{\mathbb{G}}
\def\Z{\mathbb{Z}}
\def\R{\mathbb{R}}
\def\Q{\mathbb{Q}}
\def\N{\mathbb{N}}
\def\L{\mathbb{L}}
\def\M{\textsf{M}}
\def\U{\mathbb{U}}
\def\P{\mathbb{P}}
\def\A{\mathbb{A}}
\def\fp{\mathfrak{p}}
\def\fq{\mathfrak{q}}
\def\n{\mathfrak{n}}
\def\fR{\mathfrak{R}}
\def\X{\mathcal{X}}
\def\x{\textrm{\bf x}}
\def\w{\textrm{\bf w}}
\def\ovQ{\overline{\Q}}
\def \Kab{\K^{\mathrm{ab}}}
\def \Qab{\Q^{\mathrm{ab}}}
\def \Qtr{\Q^{\mathrm{tr}}}
\def \Kc{\K^{\mathrm{c}}}
\def \Qc{\Q^{\mathrm{c}}}
\def\ZK{\Z_\K}
\def\ZKS{\Z_{\K,\cS}}
\def\ZKSf{\Z_{\K,\cS_f}}
\def\RSf{R_{\cS_{f}}}
\def\RTf{R_{\cT_{f}}}

\def\S{\mathcal{S}}
\def\vec#1{\mathbf{#1}}
\def\ov#1{{\overline{#1}}}
\def\Sp{{\operatorname{S}}}
\def\Gm{\G_{\textup{m}}}
\def\fA{{\mathfrak A}}
\def\fB{{\mathfrak B}}

\def \brho{\bm{\rho}}

\def\house#1{{%
    \setbox0=\hbox{$#1$}
    \vrule height \dimexpr\ht0+1.4pt width .5pt depth \dp0\relax
    \vrule height \dimexpr\ht0+1.4pt width \dimexpr\wd0+2pt depth \dimexpr-\ht0-1pt\relax
    \llap{$#1$\kern1pt}
    \vrule height \dimexpr\ht0+1.4pt width .5pt depth \dp0\relax}}


\newenvironment{notation}[0]{%
  \begin{list}%
    {}%
    {\setlength{\itemindent}{0pt}
     \setlength{\labelwidth}{1\parindent}
     \setlength{\labelsep}{\parindent}
     \setlength{\leftmargin}{2\parindent}
     \setlength{\itemsep}{0pt}
     }%
   }%
  {\end{list}}

\newenvironment{parts}[0]{%
  \begin{list}{}%
    {\setlength{\itemindent}{0pt}
     \setlength{\labelwidth}{1.5\parindent}
     \setlength{\labelsep}{.5\parindent}
     \setlength{\leftmargin}{2\parindent}
     \setlength{\itemsep}{0pt}
     }%
   }%
  {\end{list}}
\newcommand{\Part}[1]{\item[\upshape#1]}

\def\Case#1#2{%
\smallskip\paragraph{\textbf{\boldmath Case #1: #2.}}\hfil\break\ignorespaces}

\def\Subcase#1#2{%
\smallskip\paragraph{\textit{\boldmath Subcase #1: #2.}}\hfil\break\ignorespaces}

\renewcommand{\a}{\alpha}
\renewcommand{\b}{\beta}
\newcommand{\g}{\gamma}
\renewcommand{\d}{\delta}
\newcommand{\e}{\varepsilon}
\newcommand{\f}{\varphi}
\newcommand{\fhat}{\hat\varphi}
\newcommand{\bfphi}{{\boldsymbol{\f}}}
\renewcommand{\l}{\lambda}
\renewcommand{\k}{\kappa}
\newcommand{\lhat}{\hat\lambda}
\newcommand{\bfmu}{{\boldsymbol{\mu}}}
\renewcommand{\o}{\omega}
\renewcommand{\r}{\rho}
\newcommand{\rbar}{{\bar\rho}}
\newcommand{\s}{\sigma}
\newcommand{\sbar}{{\bar\sigma}}
\renewcommand{\t}{\tau}
\newcommand{\z}{\zeta}


\newcommand{\ga}{{\mathfrak{a}}}
\newcommand{\gb}{{\mathfrak{b}}}
\newcommand{\gn}{{\mathfrak{n}}}
\newcommand{\gp}{{\mathfrak{p}}}
\newcommand{\gP}{{\mathfrak{P}}}
\newcommand{\gq}{{\mathfrak{q}}}

\newcommand{\Abar}{{\bar A}}
\newcommand{\Ebar}{{\bar E}}
\newcommand{\kbar}{{\bar k}}
\newcommand{\Kbar}{{\bar K}}
\newcommand{\Pbar}{{\bar P}}
\newcommand{\Sbar}{{\bar S}}
\newcommand{\Tbar}{{\bar T}}
\newcommand{\gbar}{{\bar\gamma}}
\newcommand{\lbar}{{\bar\lambda}}
\newcommand{\ybar}{{\bar y}}
\newcommand{\varphibar}{{\bar\f}}

\newcommand{\Acal}{{\mathcal A}}
\newcommand{\Bcal}{{\mathcal B}}
\newcommand{\Ccal}{{\mathcal C}}
\newcommand{\Dcal}{{\mathcal D}}
\newcommand{\Ecal}{{\mathcal E}}
\newcommand{\Fcal}{{\mathcal F}}
\newcommand{\Gcal}{{\mathcal G}}
\newcommand{\Hcal}{{\mathcal H}}
\newcommand{\Ical}{{\mathcal I}}
\newcommand{\Jcal}{{\mathcal J}}
\newcommand{\Kcal}{{\mathcal K}}
\newcommand{\Lcal}{{\mathcal L}}
\newcommand{\Mcal}{{\mathcal M}}
\newcommand{\Ncal}{{\mathcal N}}
\newcommand{\Ocal}{{\mathcal O}}
\newcommand{\Pcal}{{\mathcal P}}
\newcommand{\Qcal}{{\mathcal Q}}
\newcommand{\Rcal}{{\mathcal R}}
\newcommand{\Scal}{{\mathcal S}}
\newcommand{\Tcal}{{\mathcal T}}
\newcommand{\Ucal}{{\mathcal U}}
\newcommand{\Vcal}{{\mathcal V}}
\newcommand{\Wcal}{{\mathcal W}}
\newcommand{\Xcal}{{\mathcal X}}
\newcommand{\Ycal}{{\mathcal Y}}
\newcommand{\Zcal}{{\mathcal Z}}

\renewcommand{\AA}{\mathbb{A}}
\newcommand{\BB}{\mathbb{B}}
\newcommand{\CC}{\mathbb{C}}
\newcommand{\FF}{\mathbb{F}}
\newcommand{\GG}{\G}
\newcommand{\KK}{\mathbb{K}}
\newcommand{\NN}{\mathbb{N}}
\newcommand{\PP}{\mathbb{P}}
\newcommand{\QQ}{\mathbb{Q}}
\newcommand{\RR}{\mathbb{R}}

\newcommand{\bfa}{{\boldsymbol a}}
\newcommand{\bfb}{{\boldsymbol b}}
\newcommand{\bfc}{{\boldsymbol c}}
\newcommand{\bfd}{{\boldsymbol d}}
\newcommand{\bfe}{{\boldsymbol e}}
\newcommand{\bff}{{\boldsymbol f}}
\newcommand{\bfg}{{\boldsymbol g}}
\newcommand{\bfi}{{\boldsymbol i}}
\newcommand{\bfj}{{\boldsymbol j}}
\newcommand{\bfk}{{\boldsymbol k}}
\newcommand{\bfm}{{\boldsymbol m}}
\newcommand{\bfp}{{\boldsymbol p}}
\newcommand{\bfr}{{\boldsymbol r}}
\newcommand{\bfs}{{\boldsymbol s}}
\newcommand{\bft}{{\boldsymbol t}}
\newcommand{\bfu}{{\boldsymbol u}}
\newcommand{\bfv}{{\boldsymbol v}}
\newcommand{\bfw}{{\boldsymbol w}}
\newcommand{\bfx}{{\boldsymbol x}}
\newcommand{\bfy}{{\boldsymbol y}}
\newcommand{\bfz}{{\boldsymbol z}}
\newcommand{\bfA}{{\boldsymbol A}}
\newcommand{\bfF}{{\boldsymbol F}}
\newcommand{\bfB}{{\boldsymbol B}}
\newcommand{\bfD}{{\boldsymbol D}}
\newcommand{\bfG}{{\boldsymbol G}}
\newcommand{\bfI}{{\boldsymbol I}}
\newcommand{\bfM}{{\boldsymbol M}}
\newcommand{\bfP}{{\boldsymbol P}}
\newcommand{\bfX}{{\boldsymbol X}}
\newcommand{\bfY}{{\boldsymbol Y}}
\newcommand{\bfzero}{{\boldsymbol{0}}}
\newcommand{\bfone}{{\boldsymbol{1}}}

\newcommand{\aff}{{\textup{aff}}}
\newcommand{\Aut}{\operatorname{Aut}}
\newcommand{\Berk}{{\textup{Berk}}}
\newcommand{\Birat}{\operatorname{Birat}}
\newcommand{\characteristic}{\operatorname{char}}
\newcommand{\codim}{\operatorname{codim}}
\newcommand{\Crit}{\operatorname{Crit}}
\newcommand{\critwt}{\operatorname{critwt}} 
\newcommand{\Cycle}{\operatorname{Cycles}}
\newcommand{\diag}{\operatorname{diag}}
\newcommand{\Disc}{\operatorname{Disc}}
\newcommand{\Div}{\operatorname{Div}}
\newcommand{\Dom}{\operatorname{Dom}}
\newcommand{\End}{\operatorname{End}}
\newcommand{\ExtOrbit}{\mathcal{EO}} 
\newcommand{\Fbar}{{\bar{F}}}
\newcommand{\Fix}{\operatorname{Fix}}
\newcommand{\FOD}{\operatorname{FOD}}
\newcommand{\FOM}{\operatorname{FOM}}
\newcommand{\Gal}{\operatorname{Gal}}
\newcommand{\genus}{\operatorname{genus}}
\newcommand{\GITQuot}{/\!/}
\newcommand{\GL}{\operatorname{GL}}
\newcommand{\GR}{\operatorname{\mathcal{G\!R}}}
\newcommand{\Hom}{\operatorname{Hom}}
\newcommand{\Index}{\operatorname{Index}}
\newcommand{\Image}{\operatorname{Image}}
\newcommand{\Isom}{\operatorname{Isom}}
\newcommand{\hhat}{{\hat h}}
\newcommand{\Ker}{{\operatorname{ker}}}
\newcommand{\Ksep}{K^{\textup{sep}}}  
\newcommand{\lcm}{{\operatorname{lcm}}}
\newcommand{\LCM}{{\operatorname{LCM}}}
\newcommand{\Lift}{\operatorname{Lift}}
\newcommand{\limstar}{\lim\nolimits^*}
\newcommand{\limstarn}{\lim_{\hidewidth n\to\infty\hidewidth}{\!}^*{\,}}
\newcommand{\llog}{\log\log}
\newcommand{\logplus}{\log^{\scriptscriptstyle+}}
\newcommand{\Mat}{\operatorname{Mat}}
\newcommand{\maxplus}{\operatornamewithlimits{\textup{max}^{\scriptscriptstyle+}}}
\newcommand{\MOD}[1]{~(\textup{mod}~#1)}
\newcommand{\Mor}{\operatorname{Mor}}
\newcommand{\Moduli}{\mathcal{M}}
\newcommand{\Norm}{{\operatorname{\mathsf{N}}}}
\newcommand{\notdivide}{\nmid}
\newcommand{\normalsubgroup}{\triangleleft}
\newcommand{\NS}{\operatorname{NS}}
\newcommand{\onto}{\twoheadrightarrow}
\newcommand{\ord}{\operatorname{ord}}
\newcommand{\Orbit}{\mathcal{O}}
\newcommand{\Per}{\operatorname{Per}}
\newcommand{\Perp}{\operatorname{Perp}}
\newcommand{\PrePer}{\operatorname{PrePer}}
\newcommand{\PGL}{\operatorname{PGL}}
\newcommand{\Pic}{\operatorname{Pic}}
\newcommand{\Prob}{\operatorname{Prob}}
\newcommand{\Proj}{\operatorname{Proj}}
\newcommand{\Qbar}{{\bar{\QQ}}}
\newcommand{\rank}{\operatorname{rank}}
\newcommand{\Rat}{\operatorname{Rat}}
\newcommand{\Res}{{\operatorname{Res}}}
\newcommand{\Resultant}{\operatorname{Res}}
\renewcommand{\setminus}{\smallsetminus}
\newcommand{\sgn}{\operatorname{sgn}}
\newcommand{\SL}{\operatorname{SL}}
\newcommand{\Span}{\operatorname{Span}}
\newcommand{\Spec}{\operatorname{Spec}}
\renewcommand{\ss}{{\textup{ss}}}
\newcommand{\stab}{{\textup{stab}}}
\newcommand{\Stab}{\operatorname{Stab}}
\newcommand{\Support}{\operatorname{Supp}}
\newcommand{\Sym}{\operatorname{Sym}}  
\newcommand{\tors}{{\textup{tors}}}
\newcommand{\Trace}{\operatorname{Trace}}
\newcommand{\trianglebin}{\mathbin{\triangle}} 
\newcommand{\tr}{{\textup{tr}}} 
\newcommand{\UHP}{{\mathfrak{h}}}    
\newcommand{\Wander}{\operatorname{Wander}}
\newcommand{\<}{\langle}
\renewcommand{\>}{\rangle}

\newcommand{\pmodintext}[1]{~\textup{(mod}~#1\textup{)}}
\newcommand{\ds}{\displaystyle}
\newcommand{\longhookrightarrow}{\lhook\joinrel\longrightarrow}
\newcommand{\longonto}{\relbar\joinrel\twoheadrightarrow}
\newcommand{\SmallMatrix}[1]{%
  \left(\begin{smallmatrix} #1 \end{smallmatrix}\right)}
  
  \def\({\left(}
\def\){\right)}
\def\fl#1{\left\lfloor#1\right\rfloor}
\def\rf#1{\left\lceil#1\right\rceil}


\title[  elements of large order of elliptic curves and functions
 ]
{On elements of large order of elliptic curves and multiplicative dependent images of rational functions over finite fields} 

\author[B.\ Kerr]{Bryce Kerr}
\address{Department of Mathematics and Statistics,
University of Turku,  FI-20014, Finland}
\email{bryce.kerr@utu.fi}

\author[J. Mello] {Jorge Mello}
\address{School of Mathematics and Statistics, University of New South Wales, Sydney NSW 2052, Australia}
\email{j.mello@unsw.edu.au}

\author[I. E. Shparlinski] {Igor E. Shparlinski}
\address{School of Mathematics and Statistics, University of New South Wales, Sydney NSW 2052, Australia}
\email{igor.shparlinski@unsw.edu.au}

\subjclass[2010]{11G05, 11G07}
\keywords{elliptic curve, order of a point} 

\begin{abstract} 
Let $E_1$ and $E_2$ be elliptic curves in Legendre form with integer parameters. We show there exists a constant $C$ such that for almost all primes, for all but at most $C$ pairs of points on the reduction of $E_1 \times E_2$ modulo $p$ having equal $x$ coordinate, at least one among $P_1$ and $P_2$ has a large group order. We also show similar abundance over finite fields of elements whose images under the reduction modulo $p$ of a 
finite set of rational functions have large multiplicative orders.
\end{abstract}

\maketitle

\section{Introduction}

\subsection{Description of our results}
In this paper we consider some variants in positive characteristic of 
characteristic zero results which are generically called 
{\it  unlikely intersections\/}. In particular, we give new estimates for 
 
\begin{itemize}
\item Lower bounds on orders of  points on elliptic curves over finite fields, see Section~\ref{sec:torsEC};
\item Lower bounds on multiplicative orders of  reductions of points on some varieties over $\C$,
see Section~\ref{sec:ord_var};
\end{itemize}
 These  results complement those of~\cite{BCMOS, CKSZ, Mello, Shp1, Shp2} and may be considered as nonzero characteristic variants of results of De Marco, Krieger and Ye~\cite{DKY,DKX} concerning torsion points on elliptic curves,  and also a nonzero characteristic analogue  of a result of Bombieri, Masser and Zannier~\cite{BMZ} concerning multiplicative relations between rational functions.


\subsection{General notation}
\label{sec:notation}
Throughout this work $\N = \{1, 2, \ldots\}$ is the set of positive integers.
We also write $\Z_{> a}$ for the set of $n \in \Z$ with $n > a$ and similarly 
for  $\Z_{< a}$.  

For a field $K$ we use $\ov K$ to denote the algebraic closure of $K$. 

For a prime $p$, we use $\F_p$ to denote the finite field of $p$ elements. 

The letters $k$, $\ell$  $m$ and $n$ (with or without subscripts) are always used denote positive integers;
the letter~$p$  
(with or without subscripts) is always used to denote a prime.

As usual, for given quantities  $U$ and $V$, the notations $U\ll V$, $V\gg  U$ and
$U=O(V)$ are all equivalent to the statement that the inequality
$|U|\le c V$ holds with some absolute constant $c>0$.

Throughout the paper, any implied constants in symbols $O$, $\ll$
and $\gg$ may depend on the parameters of globally defined objects, such as coefficients of 
Weierstrass equations of elliptic curves or  coefficients and degrees of polynomials defined over 
$\overline{\mathbb{Q}}$, 
and are \emph{absolute} unless specified otherwise.

The following notion of multiplicative dependence plays an important role in 
our argument. 

As usual, we say that the points $x_1,\ldots, x_n \in \overline{\mathbb{Q}}$ are multiplicatively dependent if  there exist $k_1,\ldots ,k_n \in \mathbb{Z}$ not all zero such that $x_1^{k_1}\ldots x_n^{k_n}=1$. 
If the points $x_1,\ldots,x_n\in \overline{\mathbb{Q}}$ are not multiplicatively dependent then we say they are multiplicatively independent. 

\begin{definition}
\label{def:ord} 
We define the \textit{multiplicative order} of  a  multiplicatively dependent tuple 
$(x_1,\ldots ,x_n)\in \ov \Q$ as 
\begin{align*}
\ord (x_1,\ldots ,x_n)= \min\{ \max_{1 \leq i \leq n} |k_i| :~(k_1,\ldots ,k_n)& \in \mathbb{Z}^n\setminus \{\vec{0}\},\\
&\quad  x_1^{k_1}\ldots x_n^{k_n}=1\}.
\end{align*} 
\end{definition}

We use $|\cS|$ to denote the cardinality of a finite set $\cS$. 

Finally, for a subset $\mathcal{P}$ of primes, its \textit{natural density} is defined as the real number
$$
\lim_{Q \to \infty} \frac{1}{\pi(Q)} \left|\left\{ p \in \mathcal{P} :~p \leq Q \right\}\right| ,
$$
whenever this limit exists, where, as usual 
$\pi(Q) = \left|\{ p:~p \leq Q \}\right|$.  

We say that a certain statement hold for {\it almost all primes\/} if it holds for a set of primes of natural density $1$.

\section{Main results}

\subsection{Torsion of points modular reductions of elliptic curves}
\label{sec:torsEC}

Given a point $P$ in the group of points $E(\ov K)$ on  an elliptic curve $E$ defined over  a field $K$, we denote by $\ord P$ 
the order of $P$ in the group of points on $E$ over the algebraic closure of $K$, see~\cite{Silv} for a background on elliptic curves. 

We also recall that points of finite order are called {\it torsion points\/}. 



Our first result may be considered a nonzero characteristic variant of a theorem of De Marco, Krieger and Ye~\cite{DOSS} concerning torsion points on elliptic curves.

\begin{theorem}\label{th2.3}
 There is an absolute constant $C_0$ such that for any 
fixed  elliptic curves $E_1$ and $E_2$  in Legendre 
 form 
 $$E_1:Y^2=X(X-1)(X-t_1) \mand E_2:Y^2= X(X-1)(X-t_2)
 $$ 
 with distinct $t_1, t_2  \in \mathbb{Z} \setminus \{ 0,1 \}$,   for almost all  primes $p$, 
 for all but at most $C_0$ points $(P_1,P_2)$ in $E_1(\overline{\mathbb{F}}_p)\times E_2(\overline{\mathbb{F}}_p)$ with $x(P_1)=x(P_2)$, we have 
$$
 \max\{ \ord P_1,  \ord P_2\} \geq p^{1/6 + o(1)}.
$$
\end{theorem}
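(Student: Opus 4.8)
The plan is to translate ``small order'' into ``common root of division polynomials modulo $p$'', to isolate the genuinely overlapping torsion using a Manin--Mumford theorem for the curve
$$
\mathcal{C}=\bigl\{(Q_1,Q_2)\in E_1\times E_2:~x(Q_1)=x(Q_2)\bigr\},
$$
and to count the remaining pairs by an elementary resultant--height estimate. Fix $\epsilon\in(0,1/6)$ and call a pair $(P_1,P_2)$ with $x(P_1)=x(P_2)$ \emph{bad} for $p$ if $\max\{\ord P_1,\ord P_2\}<p^{1/6-\epsilon}$; it is enough to bound the number of bad pairs by an absolute constant $C_0$ for almost all $p$, since the passage to the exponent $p^{1/6+o(1)}$ is a routine diagonalisation over a sequence $\epsilon\to0$. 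For an elliptic curve $E$ over $\mathbb{Z}$ and $n\ge2$, let $f_n^{E}\in\mathbb{Z}[X]$ be the primitive polynomial whose roots in $\overline{\mathbb{Q}}$ are the $x$-coordinates of the points of exact order $n$ on $E$; from the recursion for division polynomials and Mignotte's bound on coefficients of divisors one has $\deg f_n^{E}\ll n^2$ and $\log\|f_n^{E}\|_\infty\ll n^2\log n$, and for all but finitely many $p$ (depending only on $E$) and all $2\le n<p$ the roots of $f_n^{E}\bmod p$ are precisely the $x$-coordinates of the exact-order-$n$ points of $E\bmod p$. Hence, for $p$ large, a bad pair $(P_1,P_2)$ with $n_i=\ord P_i$ yields a common root $x(P_1)=x(P_2)$ of $f_{n_1}^{E_1}\bmod p$ and $f_{n_2}^{E_2}\bmod p$, and conversely each common root, together with the at most four sign choices, accounts for at most four bad pairs, whose orders are moreover determined by that root.

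Next I would isolate the genuine overlaps. Let $\mathcal{E}$ be the set of $(n_1,n_2)$ for which $f_{n_1}^{E_1}$ and $f_{n_2}^{E_2}$ have a common root in $\overline{\mathbb{Q}}$; such a root is exactly the $x$-coordinate of a torsion point lying on $\mathcal{C}$. This is where the hypothesis $t_1\ne t_2$ (and $t_i\notin\{0,1\}$) enters: it makes the product $X(X-1)(X-t_1)\cdot X(X-1)(X-t_2)$ a non-square in $\overline{\mathbb{Q}}(X)$, so $\mathcal{C}$ is geometrically irreducible, and Riemann--Hurwitz for the degree-$4$ map $\mathcal{C}\to\mathbb{P}^1$, $(Q_1,Q_2)\mapsto x(Q_1)$, branched exactly over $X=0,1,t_1,t_2,\infty$, gives $\genus(\mathcal{C})=2$. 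Thus $\mathcal{C}$ is a curve in the abelian surface $E_1\times E_2$ whose normalisation has genus $2$, hence it is not a torsion coset of an abelian subvariety (those being smooth of genus $\le1$), so by the Manin--Mumford theorem (Raynaud) it contains only finitely many torsion points. Therefore $\mathcal{E}$ is finite, and for all but finitely many $p$ the bad pairs arising from $(n_1,n_2)\in\mathcal{E}$ are mere reductions of this fixed finite set of torsion points of $\mathcal{C}$, so they number at most an absolute constant $C_0$ (with a uniform Manin--Mumford bound one can even take $C_0$ independent of $E_1,E_2$).

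For $(n_1,n_2)\notin\mathcal{E}$ the integer $R(n_1,n_2):=\Res\bigl(f_{n_1}^{E_1},f_{n_2}^{E_2}\bigr)$ is nonzero; once $p$ exceeds the (bounded) leading coefficients, the number of common roots of $f_{n_1}^{E_1}\bmod p$ and $f_{n_2}^{E_2}\bmod p$ is at most $v_p\bigl(R(n_1,n_2)\bigr)$, and Hadamard's inequality for the Sylvester matrix, together with the bounds above, gives $\log|R(n_1,n_2)|\ll n_1^2n_2^2\log(n_1n_2)$. Combining with the previous paragraph, for all but finitely many $p$,
$$
\#\{\text{bad pairs for }p\}\ \le\ C_0+4\!\!\sum_{\substack{(n_1,n_2)\notin\mathcal{E}\\ n_i<p^{1/6-\epsilon}}}\!\! v_p\bigl(R(n_1,n_2)\bigr);
$$
summing over primes $p\le Q$, interchanging summations, enlarging the range to $n_i\le Q^{1/6-\epsilon}$, and using $\sum_p v_p(m)\log p=\log|m|$ (so $\sum_p v_p(m)\le\log|m|/\log2$),
\begin{align*}
\sum_{p\le Q}\ \sum_{\substack{(n_1,n_2)\notin\mathcal{E}\\ n_i\le Q^{1/6-\epsilon}}} v_p\bigl(R(n_1,n_2)\bigr)
&\ \ll\ \sum_{\substack{(n_1,n_2)\notin\mathcal{E}\\ n_i\le Q^{1/6-\epsilon}}}\log\bigl|R(n_1,n_2)\bigr| \\
&\ \ll\ \Bigl(\,\sum_{n\le Q^{1/6-\epsilon}}n^2\Bigr)^{2}\log Q\ \ll\ Q^{\,1-6\epsilon}\log Q\ =\ o\bigl(\pi(Q)\bigr).
\end{align*}
Hence the left-hand double sum vanishes for a set of primes of density $1$, and for all such $p$ there are at most $C_0$ bad pairs; letting $\epsilon\to0$ yields the theorem.

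The main obstacle is the geometric input of the second step, namely that $\mathcal{C}$ is not torsion--anomalous in $E_1\times E_2$: this is precisely what fails when $t_1=t_2$, and it relies on Raynaud's theorem (and on a uniform Manin--Mumford bound if one insists on $C_0$ being truly absolute). Everything else is effective bookkeeping --- good reduction behaviour of division polynomials, the resultant--height bound, the elementary inequality $\sum_p v_p(m)\le\log|m|/\log2$, and the standard extraction of the $o(1)$ in the exponent --- and the exponent $1/6$ is exactly the threshold at which $\bigl(\sum_{n\le N}n^2\bigr)^{2}$ remains below $\pi(Q)$ for $N=Q^{1/6-\epsilon}$.
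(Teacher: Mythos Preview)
Your proof is correct and follows the same overall strategy as the paper: translate ``small order'' into a common root of division polynomials, use a characteristic-zero finiteness statement to guarantee the relevant resultants are nonzero, and then bound the exceptional primes by a height estimate on those resultants. The bookkeeping differences---you use exact-order polynomials and a first-moment sum $\sum_{p\le Q}\sum_{(n_1,n_2)} v_p\bigl(R(n_1,n_2)\bigr)$, while the paper works with the full $\psi_n$ and counts prime divisors of a single product resultant---are cosmetic, since $\Res\bigl(\prod_\ell f_\ell,\prod_k g_k\bigr)=\pm\prod_{\ell,k}\Res(f_\ell,g_k)$ and both routes arrive at the same $L^{6+o(1)}$ bound that forces the exceptional set of primes to have density zero when $L=p^{1/6-\varepsilon}$.

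The one substantive difference is the characteristic-zero input. You compute that $\mathcal{C}$ is geometrically irreducible with normalisation of genus $2$ and then invoke Raynaud's Manin--Mumford theorem; this gives finiteness of the torsion on $\mathcal{C}$, but with a bound that a priori depends on $E_1,E_2$. The paper instead quotes the De~Marco--Krieger--Ye theorem (their Lemma~3.2), which is precisely the \emph{uniform} version of this finiteness for the Legendre family and is what makes $C_0$ absolute as the theorem asserts. You flag this yourself parenthetically; to match the theorem as stated you do need that uniform input, so in the end both arguments rest on the same deep result. Your route has the advantage of making the geometric reason for finiteness (genus $>1$, hence not a torsion coset) explicit, and your use of exact-order polynomials sidesteps the bookkeeping issue of small-order divisors of $\psi_\ell$ that the paper's product-resultant formulation has to navigate.
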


\subsection{Multiplicative orders of points on   modular reductions of  varieties} 
 \label{sec:ord_var} 


We say that nonzero rational functions  $f_1,\ldots,f_n \in \mathbb{Q}(X)$   are {\it multiplicatively independent \/} 
if there is no nontrivial product with $f_1^{\ell_1}(X)\ldots f_n^{\ell_n}(X)=1$.
We also recall Definition~\ref{def:ord}. 



\begin{theorem}\label{th2.9}
 For any  multiplicatively independent  rational functions $f_1,\ldots,f_n \in \Q(X)$
 there is an effectively computable constant constant $C_0$ that depends only on $f_1,\ldots,f_n$ such that for any function $\varepsilon(z)$ with $\lim_{z  \to \infty}\varepsilon(z) =0 $,  for almost all  primes $p$, for all but at most $C_0$ points $x \in\ov \F_p$ satisfying $$f_1^{k_1}(x)\ldots f_n^{k_n}(x)=f_1^{\ell_1}(x)\ldots f_n^{\ell_n}(x)=1  
 $$ 
 for some linearly independent  integer vectors $(k_1,\ldots ,k_n), (\ell_1,\ldots ,\ell_n)$, we have 
$$
    \ord (f_1(x),\ldots ,f_n(x))\geq \varepsilon(p) p^{1/(2n+2)}. 
$$
\end{theorem}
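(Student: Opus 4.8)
The plan is to reduce the statement to a height estimate for the resultant of two univariate polynomials over $\Z$ whose reductions modulo $p$ encode the two multiplicative relations, and then to combine a characteristic-zero nonvanishing input from the theorem of Bombieri--Masser--Zannier \cite{BMZ} with a counting argument over dyadic ranges of primes. Geometrically, let $\cC\subseteq\Gm^n$ be the Zariski closure of the image of $t\mapsto(f_1(t),\ldots,f_n(t))$; multiplicative independence of $f_1,\ldots,f_n$ means exactly that $\cC$ lies in no proper subtorus, and after a harmless reduction I may also assume $\cC$ is not a translate of a proper subtorus --- the excluded case amounts to an identity $f_1^{c_1}\cdots f_n^{c_n}=c$ with constant $c$, handled by the classical lower bound for the multiplicative order of the reduction of a fixed algebraic number modulo $p$. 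The input from \cite{BMZ} is that the set $\cZ\subset\ovQ$ of $x$ for which $(f_1(x),\ldots,f_n(x))$ satisfies two linearly independent multiplicative relations is finite, with cardinality, degree and height bounded in terms of $f_1,\ldots,f_n$; equivalently, only finitely many pairs of primitive linearly independent $(\bfa,\bfb)$ admit a common solution $x\in\ovQ$ of $f_1^{a_1}(x)\cdots f_n^{a_n}(x)=f_1^{b_1}(x)\cdots f_n^{b_n}(x)=1$.

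Fix $p$ and put $N=N_p=\varepsilon(p)\,p^{1/(2n+2)}$, and let $x\in\ov\F_p$ be exceptional. From $\ord(f_1(x),\ldots,f_n(x))<N$ I get a nonzero $\bfa\in\Z^n$ with $\|\bfa\|_\infty<N$ and $f_1^{a_1}(x)\cdots f_n^{a_n}(x)=1$; clearing denominators in $g_{\bfa}=f_1^{a_1}\cdots f_n^{a_n}$, the point $x$ is a root modulo $p$ of a polynomial $h_{\bfa}\in\Z[X]$ of degree and logarithmic height $O(N)$, with absolute implied constants, since degrees grow linearly in $\|\bfa\|_1$ and Mahler measure is multiplicative. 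The crux is to use the hypothesis that $x$ also satisfies a second independent relation to produce $\bfb\in\Z^n$ linearly independent from $\bfa$ with $\|\bfb\|_\infty=O(N)$, and hence a second polynomial $h_{\bfb}\in\Z[X]$ of degree and height $O(N)$ vanishing at $x$ modulo $p$.

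Granting this, for any linearly independent pair $(\bfa,\bfb)$ outside the finite exceptional family attached to $\cZ$, the polynomials $h_{\bfa}$ and $h_{\bfb}$ are coprime in $\Q[X]$ --- a common root would give a point of $\cC$ inside the codimension-two subgroup determined by $\bfa,\bfb$, and there are only finitely many such --- so $R_{\bfa,\bfb}=\Res(h_{\bfa},h_{\bfb})$ is a nonzero integer, and the standard bound (a sum of products of a degree and a logarithmic height of these two polynomials) gives $\log|R_{\bfa,\bfb}|=O(N^2)$. An exceptional $x$ not among the reductions of $\cZ$ forces $p\mid R_{\bfa,\bfb}$ for some admissible pair, and the number of admissible pairs with entries bounded by $N$ is $O(N^{2n})$. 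For $p\in[B,2B]$ one has $N_p\le N^*\asymp\varepsilon(2B)(2B)^{1/(2n+2)}$, so such $p$ divides the fixed integer $\Pi_B=\prod R_{\bfa,\bfb}$ taken over admissible pairs with entries at most $N^*$, and $\log|\Pi_B|\ll(N^*)^{2n}\cdot(N^*)^2=(N^*)^{2n+2}\ll\varepsilon(2B)^{2n+2}B=o(B)$. Hence the number of such primes in $[B,2B]$ is $\ll(N^*)^{2n+2}/\log B=o(B/\log B)$, giving density $0$ after summing dyadically; for the remaining $p$ the exceptional $x$ lie among the reductions of $\cZ$, hence number at most an effective constant $C_0=C_0(f_1,\ldots,f_n)$. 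The exponent $1/(2n+2)$ is exactly what balances $(\#\text{pairs})\cdot(\text{size of each resultant})=(N^*)^{2n}\cdot(N^*)^2$ against $B$.

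The main obstacle is producing the second relation $\bfb$ of controlled size. The order bound supplies only the first successive minimum of the relation lattice $\Lambda_x=\{\bfc\in\Z^n:f_1^{c_1}(x)\cdots f_n^{c_n}(x)=1\}$; its second minimum $\lambda_2(\Lambda_x)$ need not be $O(N)$, i.e.\ $(f_1(x),\ldots,f_n(x))$ may have large order inside the subtorus $T_{\bfa}=\{y\in\Gm^n:y_1^{a_1}\cdots y_n^{a_n}=1\}$ cut out by $\bfa$. I expect to resolve this by a case split on $\lambda_2(\Lambda_x)$: when it is $N^{O(1)}$ the argument above applies (at worst with a slightly smaller exponent); when it is large, the point lies in the zero-dimensional scheme $\cC\cap T_{\bfa}$ (finite since $\cC\not\subseteq T_{\bfa}$), and one descends either by induction on $n$ or by combining the standard lower bound for the order of the reduction of a non-root-of-unity algebraic number with Laurent's theorem on $\Gm^n$, which confines all but $O(1)$ of these points to the finite set $\cC\cap\mu_\infty^n$. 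Fitting the two regimes together while keeping the dependence on the heights and degrees of the $f_i$ effective is where most of the work lies.
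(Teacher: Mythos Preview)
Your core strategy --- invoke Maurin's effective version of Bombieri--Masser--Zannier so that the relevant resultants are nonzero integers, bound those resultants through the degrees and heights of the clearing-denominator polynomials, and count the exceptional primes --- is exactly what the paper does. The dyadic decomposition over $p\in[B,2B]$ is a cosmetic difference; the paper just fixes $z$ and sets $L=\varepsilon(z)z^{1/(2n+2)}$.

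The divergence is in your last two paragraphs. The paper never attempts to manufacture a second relation $\bfb$ of size $O(N)$ from the bare hypothesis that some second independent relation exists. It simply takes \emph{both} vectors $\vec k,\vec\ell$ with $\|\vec k\|_\infty,\|\vec\ell\|_\infty\le L$ from the start, ranges over all $O(L^{2n})$ such pairs, and excludes the $O(L^{2n+2}/\log L)$ primes dividing some $\Res(\widetilde G_{\vec k},\widetilde G_{\vec\ell})$. For the surviving primes no $x$ (outside the $\le C_0$ reductions of the BMZ set) admits two independent relations \emph{both} of $\ell_\infty$-norm at most $L$. That is the entire argument; there is no case split on $\lambda_2(\Lambda_x)$, no induction on $n$, and no appeal to Laurent's theorem.

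Your worry about $\lambda_2$ is really a worry about the \emph{statement}, not a missing step. Over $\ov\F_p$ the relation lattice $\Lambda_x$ has full rank for every $x$, so for $n\ge2$ the hypothesis ``$x$ satisfies two independent relations'' is vacuous, and the conclusion $\ord=\lambda_1(\Lambda_x)\ge L$ for all but $C_0$ points is not true as written: already for $f_1=X$, $f_2=X-1$, each nonzero $(a,b)$ with $|a|,|b|<L$ contributes its own roots of $X^a(X-1)^b=1$, and BMZ says these root sets are essentially disjoint, so there are $\gg L^3$ points with $\lambda_1<L$. What the paper's proof actually establishes --- and what the balance $(N^*)^{2n}\cdot(N^*)^2\asymp B$ you identified reflects --- is the $\lambda_2$ version: for almost all $p$, at most $C_0$ points have two independent relations both of size below $L$. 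Drop the attempt to bound $\lambda_1$; your proposed descent in the large-$\lambda_2$ regime cannot close the gap because the stronger statement it targets is false.
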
 
 
We remark that Theorem~\ref{th2.9} complements some recent results of Barroero, Capuano,   M{\'e}rai,   Ostafe and Sha~\cite{BCMOS} and is based on similar technical tools.

\section{Preliminaries}

\subsection{Tools from Diophantine geometry}
\label{sec:diopantine}
For a polynomial $G$ with integer coefficients, its \textit{height}, denoted by $h(G)$, is defined as the logarithm of the maximum of the absolute values of the coefficients of $G$.

We recall the following well-known estimate, see, for example,~\cite[Lemma~1.2~(1.b) and~(1.d)]{KPS}.

\begin{lemma}\label{lem3.1}
 Let $G_i \in \mathbb{Z}[T_1,\ldots ,T_n],\  i=1,\ldots ,s$. Then
\begin{align*}
 \sum_{i=1}^sh(G_i) & - 2 \log (n+1) \sum_{i=1}^s \deg G_i \\
 & \le 
h\(\prod_{i=1}^sG_i\) \leq   \sum_{i=1}^sh(G_i) + \log (n+1) \sum_{i=1}^s \deg G_i.
\end{align*}
 \end{lemma}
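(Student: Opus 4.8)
The plan is to establish the two inequalities separately: the right-hand one is elementary, while the left-hand one is the several-variable Gelfond inequality, which I would derive from multiplicativity of the Mahler measure.

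For the upper bound, write $\|G\|$ for the largest absolute value of a coefficient of $G$ (so $h(G)=\log\|G\|$) and $L(G)$ for the sum of the absolute values of the coefficients. Since every coefficient of a product $G_1G_2$ is a sum of products of a coefficient of $G_1$ and a coefficient of $G_2$, the length is submultiplicative, $L(G_1G_2)\le L(G_1)L(G_2)$, and hence $L\left(\prod_iG_i\right)\le\prod_iL(G_i)$. A polynomial of total degree $d$ in $n$ variables has at most $\binom{n+d}{n}=\prod_{j=1}^{d}\frac{n+j}{j}\le(n+1)^{d}$ monomials, so $\|G\|\le L(G)\le(n+1)^{\deg G}\|G\|$. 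Combining these,
\[
\Bigl\|\textstyle\prod_iG_i\Bigr\|\ \le\ L\Bigl(\textstyle\prod_iG_i\Bigr)\ \le\ \prod_iL(G_i)\ \le\ \prod_i(n+1)^{\deg G_i}\,\|G_i\|,
\]
and taking logarithms gives $h\left(\prod_iG_i\right)\le\sum_ih(G_i)+\log(n+1)\sum_i\deg G_i$.

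For the lower bound I would bring in the Mahler measure
\[
M(G)=\exp\left(\int_{[0,1]^n}\log\bigl|G(e^{2\pi i\theta_1},\dots,e^{2\pi i\theta_n})\bigr|\,d\theta_1\cdots d\theta_n\right),
\]
which (for nonzero $G$) is multiplicative, $M\left(\prod_iG_i\right)=\prod_iM(G_i)$, on moving the logarithm through the integral. From Jensen's inequality and the monomial count above, $M(G)\le L(G)\le(n+1)^{\deg G}\|G\|$, i.e.\ $\|G\|\ge(n+1)^{-\deg G}M(G)$. In the other direction, Mahler's coefficient estimate $|a_{\mathbf k}|\le\prod_j\binom{\deg_{T_j}G}{k_j}M(G)\le 2^{\sum_j\deg_{T_j}G}M(G)$ yields $\|G\|\le\kappa(G)M(G)$ with $\kappa(G)\le 2^{n\deg G}$. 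Since $\deg\prod_iG_i\le\sum_i\deg G_i$, multiplying these together gives $\prod_i\|G_i\|\le\left(\prod_i\kappa(G_i)\right)M\left(\prod_iG_i\right)\le\left(\prod_i\kappa(G_i)\right)(n+1)^{\sum_i\deg G_i}\bigl\|\prod_iG_i\bigr\|$, and taking logarithms gives the left-hand inequality, but with an error term of size $\le\bigl(n\log2+\log(n+1)\bigr)\sum_i\deg G_i$.

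The main obstacle is exactly this loss of the constant: the crude argument produces $n\log2+\log(n+1)$ in place of the stated $2\log(n+1)$ (the two agree for $n=1$ but not for $n\ge2$). Obtaining the sharp constant requires replacing the coefficient estimate, which only controls the \emph{partial} degrees, by the bound $\|G\|\le(n+1)^{\deg G}M(G)$ in terms of the \emph{total} degree — the improvement coming from the fact that the exponent vectors of $G$ lie in a simplex rather than a box, which is the refinement carried out in~\cite{KPS}. With $\kappa(G_i)\le(n+1)^{\deg G_i}$ the error becomes $(n+1)^{\sum_i\deg G_i}\cdot(n+1)^{\sum_i\deg G_i}$, i.e.\ $2\log(n+1)\sum_i\deg G_i$ after taking logarithms, as required; so in practice I would simply invoke the estimate of~\cite{KPS} for this step.
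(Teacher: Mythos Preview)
The paper does not prove this lemma at all: it simply records the estimate and cites~\cite[Lemma~1.2~(1.b) and~(1.d)]{KPS}. So there is no ``paper's own proof'' to compare against beyond the reference.

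Your proposal goes further than the paper does. The upper bound you give is a clean self-contained argument (submultiplicativity of $L(\cdot)$ together with the monomial count $\binom{n+d}{n}\le(n+1)^d$), and it delivers exactly the stated constant. For the lower bound you correctly set up the Mahler-measure route and isolate the one place where something nontrivial is needed: the sharp comparison $\|G\|\le(n+1)^{\deg G}M(G)$ in terms of the \emph{total} degree. You are honest that the crude coefficient bound only gives $n\log 2+\log(n+1)$, and you then invoke~\cite{KPS} for the refinement --- which is precisely what the paper does for the entire lemma. In effect you have unpacked the citation, proved the easy half outright, and localised the hard half to the specific inequality in~\cite{KPS} that carries the weight. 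That is more informative than the paper's bare reference, and nothing in your argument is wrong.
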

 
We  also use an estimate on the height of sums of polynomials which is an easy consequence of the definition of height.
\begin{lemma}\label{lem3.11}
 Let $G_i \in \mathbb{Z}[T_1,\ldots ,T_n], i=1,\ldots ,s$. Then
$$
h\(\sum_{i=1}^sG_i\) \leq \  \max_{1\le i \le s}h(G_i)+\log{s}.
$$
 \end{lemma}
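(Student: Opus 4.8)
The plan is to unwind the definition of the height $h$ and estimate the coefficients of the sum coefficient-by-coefficient. First I would write each polynomial in the monomial basis as $G_i=\sum_{\bfa}c_{i,\bfa}T^{\bfa}$, where $\bfa$ runs over multi-indices in $\{0,1,2,\ldots\}^n$ and only finitely many $c_{i,\bfa}\in\Z$ are nonzero. By definition $h(G_i)=\log\max_{\bfa}|c_{i,\bfa}|$, and hence $|c_{i,\bfa}|\le e^{h(G_i)}$ for every multi-index $\bfa$ and every $i=1,\ldots,s$.

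Next I would note that the monomial expansion of the sum is
$$
\sum_{i=1}^sG_i=\sum_{\bfa}\(\sum_{i=1}^s c_{i,\bfa}\)T^{\bfa},
$$
so that the coefficient of $T^{\bfa}$ in $\sum_{i=1}^s G_i$ is exactly $\sum_{i=1}^s c_{i,\bfa}$. Applying the triangle inequality, for every $\bfa$ we get
$$
\left|\sum_{i=1}^s c_{i,\bfa}\right|\le\sum_{i=1}^s|c_{i,\bfa}|\le\sum_{i=1}^s e^{h(G_i)}\le s\, e^{\max_{1\le i\le s}h(G_i)}.
$$
Since the right-hand side does not depend on $\bfa$, taking the maximum over $\bfa$ and then the logarithm yields
$$
h\(\sum_{i=1}^sG_i\)=\log\max_{\bfa}\left|\sum_{i=1}^s c_{i,\bfa}\right|\le\log s+\max_{1\le i\le s}h(G_i),
$$
which is the asserted inequality.

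There is no real obstacle here; the argument is entirely elementary and does not even use the hypothesis that the coefficients are integers. The only minor points worth a remark are the degenerate case in which $\sum_{i=1}^sG_i$ is the zero polynomial, where $h$ is conventionally taken to be $-\infty$ (or $0$) and the bound holds trivially, and the observation that, unlike the product bound in Lemma~\ref{lem3.1}, the estimate is completely insensitive to the number of variables $n$.
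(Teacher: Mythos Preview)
Your argument is correct and is exactly the elementary unfolding of the definition that the paper has in mind; the paper does not give a proof but simply states that the lemma ``is an easy consequence of the definition of height.'' Your coefficient-by-coefficient triangle inequality is that consequence, so there is nothing to add or compare.
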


We also need a resultant bound, which follows from Hadamard's inequality, see for example,~\cite[Theorem~6.23]{vzGG}. 
\begin{lemma}\label{lemres}
Let 
$$A(X)=\sum_{i=1}^ m a_iX^i \mand B(X)=\sum_{j=1}^n b_jX^j
$$ 
be two polynomials in $\mathbb{C}[X]$ of respective degrees $m$ and $n$.
Then their  resultant $\Res(A,B)$ is bounded by
$$
|\Res(A,B)| \leq  \left(\sum_{i=1}^ m |a_i| ^2\right)^{n/2}  \left(\sum_{j=1}^n |b_j| ^2\right)^{m/2}. 
$$
\end{lemma}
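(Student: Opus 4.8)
The plan is to realise $\Res(A,B)$ as a single determinant and then bound it by Hadamard's inequality.

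The first step is to recall that $\Res(A,B)$ equals the determinant of the Sylvester matrix $\mathrm{Syl}(A,B)$, the $(m+n)\times(m+n)$ matrix whose first $n$ rows are successive shifts of the coefficient vector of $A$, padded with zeros, and whose remaining $m$ rows are the analogous shifts of the coefficient vector of $B$. This is precisely the set-up recorded in \cite[Theorem~6.23]{vzGG}, so nothing needs to be proved here; I would only stress the feature that the argument uses, namely that every row of $\mathrm{Syl}(A,B)$ is a shift (hence a reordering, with extra zeros) of either the coefficient vector of $A$ or that of $B$.

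The second step is to apply Hadamard's inequality: for a square complex matrix with rows $v_1,\ldots,v_N$ one has $|\det|\le\prod_{r=1}^{N}\|v_r\|_2$. Each of the $n$ rows of $\mathrm{Syl}(A,B)$ built from $A$ has Euclidean norm $\bigl(\sum_i|a_i|^2\bigr)^{1/2}$, since shifting and zero‑padding do not change the $\ell^2$ norm, and each of the $m$ rows built from $B$ has norm $\bigl(\sum_j|b_j|^2\bigr)^{1/2}$. Taking the product over all $N=m+n$ rows gives
$$
|\Res(A,B)|\le\Bigl(\sum_{i=1}^m|a_i|^2\Bigr)^{n/2}\Bigl(\sum_{j=1}^n|b_j|^2\Bigr)^{m/2},
$$
which is the asserted bound.

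There is no real obstacle in this argument; the only point requiring any care is the bookkeeping of the Sylvester matrix — checking that it is arranged so that each $A$-row consists precisely of the coefficients of $A$ together with zeros, and likewise for the $B$-rows — which is exactly the standard normalisation in the cited reference. (If one preferred to avoid invoking Hadamard's inequality, one could instead begin from the factorisation $\Res(A,B)=a_m^{n}b_n^{m}\prod_{i,j}(\alpha_i-\beta_j)$ over the roots $\alpha_i$ of $A$ and $\beta_j$ of $B$, but the Sylvester/Hadamard route is shorter and needs no estimates on the roots.)
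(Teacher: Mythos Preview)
Your argument is correct and is precisely the approach the paper indicates: the paper does not give a proof but states that the bound ``follows from Hadamard's inequality, see for example~\cite[Theorem~6.23]{vzGG}'', and your Sylvester-matrix-plus-Hadamard computation is exactly how that implication is realised. There is nothing to add.
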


\subsection{Tools from unlikely intersections}

For an elliptic curve $E$ over a field $K$ we use $E^{\text{tors}}$ to denote the set of all torsion points on $E\(\ov K\)$.  

By fixing coordinates on $\mathbb{P}^1$, we consider the Legendre family of elliptic curves
$$
E_t: Y^2=X(X-1)(X-t)
$$ with $t \in \mathbb{C}/\{0,1 \}$ and the standard projection $\pi(x,y)=x$ on $E_t$.

By a result of De Marco, Krieger and Ye~\cite[Theorem~1.4]{DKX}, we have:

\begin{lemma}\label{lem3.2}
There exist an 
absolute constant $B$ such that 
$$
\left|\pi (E_{t_1}^{\mathrm{tors}}) \bigcap \pi (E_{t_2}^{\mathrm{tors}})\right| \leq B,
$$ for all $t_1 \neq t_2$ in $\mathbb{C} \setminus \{0,1 \}$. 
\end{lemma}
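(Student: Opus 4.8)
The plan is to read Lemma~\ref{lem3.2} as a statement about the common height-zero points of two adelically metrized line bundles on $\mathbb{P}^1$, and to obtain the uniform bound from arithmetic equidistribution together with a rigidity feature of the Legendre family; this is the route of~\cite{DKX}. First I would pass to algebraic parameters: since $x_0\in\pi(E_t^{\mathrm{tors}})$ is equivalent to $\psi_m(t,x_0)=0$ for some division polynomial $\psi_m$, a pair $t_1\neq t_2$ in $\mathbb{C}$ admitting $B+1$ distinct points of $\pi(E_{t_1}^{\mathrm{tors}})\cap\pi(E_{t_2}^{\mathrm{tors}})$ — each witnessed by a definite pair of vanishing division polynomials — would give a $\mathbb{C}$-point of a constructible set defined over $\mathbb{Q}$, which is then nonempty over $\overline{\Q}$ as well; so it suffices to bound the intersection by an absolute constant for $t_1,t_2\in\overline{\Q}\setminus\{0,1\}$. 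For such $t$ I would attach to $E_t$ and to the degree-$2$ map $\pi\colon E_t\to\mathbb{P}^1$ a semipositive adelically metrized line bundle $\overline L_t$ on $\mathbb{P}^1_{\overline{\Q}}$ with $h_{\overline L_t}(x_0)=\hhat_{E_t}(P)$ for any $P$ with $\pi(P)=x_0$ (concretely, $\pi^{*}\overline L_t$ is the canonically metrized $\mathcal O(2(O))$ on $E_t$). Then $h_{\overline L_t}\ge 0$, its essential minimum is $0$, and the locus $\{h_{\overline L_t}=0\}$ is precisely the (Zariski dense) set $\pi(E_t^{\mathrm{tors}})$; hence $\pi(E_{t_1}^{\mathrm{tors}})\cap\pi(E_{t_2}^{\mathrm{tors}})$ is the common height-zero locus of $\overline L_{t_1}$ and $\overline L_{t_2}$.

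Finiteness of that locus is the familiar dichotomy. If it were infinite, any infinite sequence of distinct points in it would be a small generic sequence for \emph{both} $\overline L_{t_1}$ and $\overline L_{t_2}$, so by the arithmetic equidistribution theorem it would equidistribute, at the archimedean place, with respect to both curvature measures $\mu_{t_1,\infty}$ and $\mu_{t_2,\infty}$; hence $\mu_{t_1,\infty}=\mu_{t_2,\infty}$. But $\mu_{t,\infty}$ is the pushforward under $\pi$ of the Haar measure on $E_t(\mathbb{C})$, with explicit density on $\mathbb{P}^1(\mathbb{C})$ proportional to $|x(x-1)(x-t)|^{-1}$, which determines $t$; so $t_1=t_2$, a contradiction. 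To turn finiteness into the \emph{uniform} bound $B$ I would make this quantitative: bound the number of common height-zero points of $\overline L_{t_1}$ and $\overline L_{t_2}$ by an arithmetic intersection quantity (a ``mutual energy'' of the two metrized bundles), using an effective equidistribution estimate or an explicit analysis of canonical heights over the Legendre family, and then show that this quantity stays above a fixed positive constant for every $t_1\neq t_2$ by controlling the local curvature measures $\mu_{t,v}$ over the family and at its degenerations $t\in\{0,1,\infty\}$.

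The hard part is precisely this uniformity. The equidistribution-plus-rigidity argument only yields finiteness, with a bound that may a priori depend on $t_1$ and $t_2$; the genuine content of Lemma~\ref{lem3.2} is that the separation of the two canonical measures — equivalently, the relevant arithmetic intersection number — is bounded away from zero uniformly over all pairs $t_1\neq t_2$, including pairs with $t_1$ and $t_2$ very close or near a degenerate value, where the archimedean densities $|x(x-1)(x-t_i)|^{-1}$ nearly coincide. Packaging that near-coincidence into an effective inequality with an absolute constant is the technical core of~\cite{DKX}, and is what is being quoted here.
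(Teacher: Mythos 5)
The paper gives no proof of this lemma at all: it is quoted verbatim as \cite[Theorem~1.4]{DKX}, and your proposal likewise (and correctly) defers the decisive uniformity step to that reference, so you are taking essentially the same approach. Your sketch of the underlying strategy of \cite{DKX} (canonical height zero locus, equidistribution plus rigidity of the archimedean measures for finiteness, and a quantified mutual-energy estimate for the absolute bound) is an accurate description of what that citation contains.
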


The next result of  Maurin~\cite[Th{\'e}or{\`e}me~1.2]{Mau},   which improves and makes effective the previous result of Bombieri,  Masser and Zannier~\cite[Theorem~2]{BMZ} (see also~\cite{CMPZ}), 
concerns intersections between a curve and subgroups of the $n$-dimensional torus with co-dimension at least $2$.
As usual, we use $\G_m= \Q^*$ to denote the multiplicative group of $\Q$.  This naturally transfers to a group 
structure on  $\G_m^n$. Following the terminology of~\cite{BMZ}, a connected algebraic subgroup $\cH$ of $\G_m^n$
 is called   a {\it torus\/}. If a torus $\cH \ne   \G_m^n$  call it a  {\it proper  subtorus\/}. Finally as set $\gamma\cH$,  with $\gamma \in \G_m$
 is called a translate of $\cH$.   Then by a special case of~\cite[Th{\'e}or{\`e}me~1.2]{Mau}, 
 we have the following.  
 
%
%
%
\begin{lemma}\label{lem:multidep}
Let $ f_1,\ldots,f_n \in \mathbb{Q}(X)$ be rational functions that are multiplicatively independent.  
Then the set of $\alpha \in \bar{\mathbb{Q}}$ such that 
$$f_1(\alpha)^{a_1}\ldots f _n(\alpha)^{a_n}=f_1(\alpha)^{b_1}\ldots f_n(\alpha)^{b_n}=1,
$$ 
for some  linearly independent vectors $(a_1,\ldots ,a_n), (b_1,\ldots ,b_n)\in \Z^n$ is finite 
of cardinality bounded by an effective constant depending only on $f_1,\ldots,f_n$. 
\end{lemma}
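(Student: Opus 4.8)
The plan is to recognise the set in the statement as the intersection of an algebraic curve in $\G_m^n$ with the union of all algebraic subgroups of $\G_m^n$ of codimension at least two, and then to invoke Maurin's theorem \cite{Mau}.

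First I would set up the geometry. Let $Z\subset\PP^1$ be the finite set of zeros and poles of $f_1,\dots,f_n$, and let $\varphi\colon\PP^1\setminus Z\to\G_m^n$ be the morphism $\varphi(\alpha)=(f_1(\alpha),\dots,f_n(\alpha))$. Let $\mathcal{C}\subseteq\G_m^n$ be the Zariski closure of its image; it is an irreducible curve defined over $\Q$ whenever at least one $f_i$ is non-constant. The opposite case, in which every $f_i$ is a constant, is trivial: then a relation $f_1(\alpha)^{a_1}\cdots f_n(\alpha)^{a_n}=1$ with $\vec a\neq\vec 0$ would hold identically in $\alpha$, contradicting multiplicative independence, so the set in the statement is empty and $C_0=0$ works. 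In the main case, an $\alpha\notin Z$ with $f_1(\alpha)^{a_1}\cdots f_n(\alpha)^{a_n}=f_1(\alpha)^{b_1}\cdots f_n(\alpha)^{b_n}=1$ for linearly independent $\vec a,\vec b\in\Z^n$ is exactly an $\alpha$ with $\varphi(\alpha)\in\{x\in\G_m^n:\ x^{\vec a}=x^{\vec b}=1\}$, a subgroup of $\G_m^n$ of codimension two, since $\vec a,\vec b$ span a rank-two sublattice. Thus $\varphi$ maps every such $\alpha$ into the set $\mathcal{C}\cap\bigcup_{\mathcal{G}}\mathcal{G}$, the union being over all algebraic subgroups $\mathcal{G}$ of $\G_m^n$ with $\codim\mathcal{G}\geq 2$. (The finitely many $\alpha\in Z$ contribute at most $|Z|$ to the final count, so from here on I may assume $\alpha\notin Z$.)

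The key step is to verify the hypothesis of Maurin's theorem, namely that $\mathcal{C}$ is not contained in any proper algebraic subgroup of $\G_m^n$. If it were, then $\mathcal{C}\subseteq\zeta\mathcal{H}$ for some torsion point $\zeta$ and proper subtorus $\mathcal{H}$, and $\mathcal{H}$ is cut out by at least one nontrivial character $x\mapsto x^{\vec c}$ with $\vec c\in\Z^n\setminus\{\vec 0\}$; restricting to $\mathcal{C}$ shows that $x^{\vec c}$ is identically the root of unity $\zeta^{\vec c}$, so pulling back along $\varphi$ gives the identity of rational functions $f_1^{c_1}\cdots f_n^{c_n}=\zeta^{\vec c}$, and raising to the power $N=\ord(\zeta^{\vec c})$ gives $f_1^{Nc_1}\cdots f_n^{Nc_n}=1$ with $(Nc_1,\dots,Nc_n)\neq\vec 0$, contradicting multiplicative independence. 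I note that the hypothesis must genuinely be ``proper algebraic subgroup'' and not ``translate of a proper subtorus'': multiplicative independence does not forbid an identity $f_1^{c_1}\cdots f_n^{c_n}=c$ with $c$ a non-torsion constant, but such an identity only places $\mathcal{C}$ inside a non-torsion coset, which is harmless for Maurin's theorem.

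With this checked, Maurin's theorem yields an effectively computable bound $M$, depending only on the degree and height of $\mathcal{C}$ and hence only on $f_1,\dots,f_n$, with $|\mathcal{C}\cap\bigcup_{\mathcal{G}}\mathcal{G}|\leq M$. Since $\varphi$ is a non-constant morphism of curves, each of its fibres has at most $\deg\varphi$ points, and $\deg\varphi\leq\max_i\deg f_i$, so at most $M\max_i\deg f_i$ values of $\alpha\notin Z$ can occur; adding the at most $|Z|$ remaining values of $\alpha\in Z$ produces an effective bound $C_0$ depending only on $f_1,\dots,f_n$, as required. The one place that needs real care is the translation carried out in the third paragraph — pinning down exactly which non-degeneracy property of the tuple $(f_1,\dots,f_n)$ corresponds to the hypothesis of Maurin's theorem (and why the weaker ``proper algebraic subgroup'' condition, rather than ``translate of a proper subtorus'', is the correct and available one); after that, everything reduces to bookkeeping with the degrees and heights of the $f_i$.
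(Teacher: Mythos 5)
Your proposal is correct and follows exactly the route the paper takes: the paper states this lemma as an immediate special case of Maurin's Th\'eor\`eme~1.2 (with \cite{CMPZ} for effectivity) and gives no further argument, while you correctly supply the omitted reduction --- the curve $\mathcal{C}=\overline{\varphi(\PP^1\setminus Z)}\subset\G_m^n$, the verification that multiplicative independence of the $f_i$ is precisely the hypothesis that $\mathcal{C}$ lies in no proper algebraic subgroup (torsion cosets being the right generality), and the bookkeeping with $\deg\varphi$ and $|Z|$. No gaps.
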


\subsection{Background on division polynomials}
\label{sec:division}
Here we give some preliminary estimates for division polynomials for elliptic curves in Legendre form. The results contained in this section are due to Ho~\cite[Chapter~4]{Ho} and are obtained in his master's thesis. Since this thesis may be difficult to access, we reproduce some details.

Let $E_{\lambda}$ be an elliptic curve given in Legendre form
$$
E_\lambda: Y^2=X(X-1)(X-\lambda).
$$
The division polynomials $\psi_k$ are defined recursively  by 
\begin{align*}
\psi_{2k+1}&=\psi_{k+2}\psi_k^3-\psi_{k-1}\psi_{k+1}^3, \\
\psi_{2k}&=\frac{1}{2Y}\psi_k(\psi_{k+2}\psi_{k-1}^2-\psi_{k-2}\psi_{k+1}^2),
\end{align*}
with initial values 
\begin{align*}
\psi_0&=0 \\
\psi_1&=1 \\
\psi_2&=2Y \\
\psi_3&=3X^4-4(1+\lambda)X^3+6\lambda X^2-\lambda^2 \\
\psi_4&=2Y(2X^6-4(1+\lambda)X^5+10\lambda X^4\\
& \qquad \qquad \qquad \qquad \qquad -10\lambda^2X^2+4\lambda^2(1+\lambda)X-2\lambda^3).
\end{align*}

We note that similar (but slightly different) polynomials have also been introduced by Stoll~\cite[Section~3]{Stoll}.
However this definition suits our purpose better.

Define 
\begin{align*}
\phi_n&=X\psi_n^2-\psi_{n+1}\psi_{n-1} \\
4Y\omega_n&=\psi_{n-1}^2\psi_{n+2}-\psi_{n-2}\psi_{n+1}^2.
\end{align*}

Arguing as in~\cite[Excercise~3.7]{Silv}, we have:

\begin{lemma}
\label{lem:torsion1}
For any $P\in E_{\lambda}$ and $n\ge 1$ we have 
$$
[n]P=\left(\frac{\phi_n(P)}{\psi_n(P)^2},\frac{\omega_n(P)}{\psi_n(P)^2}\right).
$$
\end{lemma}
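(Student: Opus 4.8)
The plan is to adapt the classical argument of~\cite[Exercise~3.7]{Silv} identifying multiplication by $n$ with the division polynomials, specialised to the Legendre family $E_\lambda:Y^2=X(X-1)(X-\lambda)$, and to proceed by induction on $n$ using the recursions for $\psi_k$ together with the chord--tangent group law. It is convenient to isolate first a parity/integrality lemma, proved by a parallel induction on $k$: the polynomial $\psi_n$ lies in $\Z[\lambda][X]$ when $n$ is odd and in $2Y\,\Z[\lambda][X]$ when $n$ is even, $\phi_n\in\Z[\lambda][X]$ for every $n$, and $\omega_n$ is a polynomial in $X$ when $n$ is even and lies in $Y\,\Z[\lambda][X]$ when $n$ is odd. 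Feeding these parities through $Y^2=X(X-1)(X-\lambda)$ shows that the quotients occurring in the statement are genuine rational functions on $E_\lambda$---the first a rational function of $X$ alone, the second $Y$ times such a function---regular away from the nonzero $n$-torsion, while at a nonzero $n$-torsion point $P$ one has $\psi_n(P)=0$ and both sides of the claimed identity reduce to $\Ocal$. Hence it suffices to prove the identity as an identity of rational functions on $E_\lambda$.

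For the induction I would first settle the base cases $n=1$ and $n=2$ directly: $n=1$ is the tautology $\phi_1=X$, $\psi_1=1$, $\omega_1=Y$, and for $n=2$ one substitutes $\psi_2=2Y$, $\psi_3$, $\psi_4$ into the definitions of $\phi_2$ and $\omega_2$ and compares with the explicit duplication formula on $Y^2=X(X-1)(X-\lambda)$ read off from the chord--tangent construction. For the inductive step, assuming the formula for all smaller indices, I would split on parity and write $[2k+1]P=[k]P\oplus[k+1]P$ and $[2k]P=[k-1]P\oplus[k+1]P$; then, substituting the already-known coordinates of the relevant smaller multiples of $P$ into the addition law on $E_\lambda$, clearing denominators, and invoking the recursions defining $\psi_{2k+1}$ and $\psi_{2k}$, one checks that the output collapses to the pair of coordinates predicted by $\phi_{2k+1},\omega_{2k+1},\psi_{2k+1}$ (respectively $\phi_{2k},\omega_{2k},\psi_{2k}$).

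The genuine work---and the step I expect to be the main obstacle---is that algebraic collapse. It rests on polynomial identities among the $\psi_k$ in $\Z[\lambda,X,Y]/\bigl(Y^2-X(X-1)(X-\lambda)\bigr)$, most notably the three-term relation
$$
\psi_{m+n}\psi_{m-n}\,\psi_r^2=\psi_{m+r}\psi_{m-r}\,\psi_n^2-\psi_{n+r}\psi_{n-r}\,\psi_m^2,
$$
which, together with $\phi_n=X\psi_n^2-\psi_{n+1}\psi_{n-1}$, forces the numerator of the difference of $x$-coordinates appearing in the addition law to factor as $\pm\psi_{\mathrm{sum}}\psi_{\mathrm{diff}}$, and similarly for the $y$-coordinate. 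The cleanest way to obtain all such identities at once is to prove them over $\C$: writing $E_\lambda\cong\C/\Lambda$ and normalising the recursion against the explicit $\psi_1,\psi_2,\psi_3$, the classical formula $\psi_n=\sigma(nz)/\sigma(z)^{n^2}$ (up to a nonzero constant) reduces everything to the Weierstrass $\sigma$-function addition formula $\wp(z_1)-\wp(z_2)=-\sigma(z_1+z_2)\sigma(z_1-z_2)/\bigl(\sigma(z_1)^2\sigma(z_2)^2\bigr)$ and its derivatives; since every identity in play is between polynomials with coefficients in $\Z[\lambda]$ and holds for all $\lambda\in\C$, they hold identically, hence remain valid over $\overline{\F}_p$, which is the generality needed. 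If instead one wants a characteristic-free, self-contained argument, one proves the three-term identity first by its own induction (say on $r$) before running the main induction on $n$; the Legendre-specific input is then modest---the explicit $\psi_2,\psi_3,\psi_4$ and the form of the addition law---but the bookkeeping in the inductive step must be carried through with care.
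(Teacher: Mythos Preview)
Your proposal is correct and follows exactly the route the paper indicates: the paper does not give an independent proof but simply writes ``Arguing as in~\cite[Exercise~3.7]{Silv}'', and your plan is precisely to carry out that classical argument (induction on $n$ via the recursions for $\psi_k$, the chord--tangent addition law, and the underlying three-term $\psi$-identities). In other words, you have supplied the details that the paper elects to suppress by citation; there is no divergence in approach.
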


We need some basic  properties of these polynomials which are contained 
in the  Master's thesis of Ho~\cite[Chapter~4]{Ho}.    For the sake of completeness, 
we prove these results in Appendix~\ref{app:div poly}. Our next result 
collects together the statements of Lemmas~\ref{lem:polyz}, \ref{lem:polydeg} and~\ref{lem:polyht}.

\begin{lemma} \label{lem:div poly}
The rational functions $\psi_n$ are polynomials in the ring $ \Z[\lambda, X, Y]$ of 
degree $\deg \psi_n \le n^{2+o(1)}$ and of height $h(\psi_n) \le n^{2+o(1)}$.
\end{lemma}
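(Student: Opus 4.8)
Throughout I abbreviate $D:=X(X-1)(X-\lambda)\in\Z[\lambda,X]$, which has $\deg D=3$ and $h(D)=0$. All three assertions (integrality, degree, height) are proved by strong induction on $n$ from the two defining recursions, and the plan is first to reduce everything to a statement about polynomials in $\Z[\lambda,X]$ alone.

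\emph{Integral structure.} One checks by strong induction that $\psi_n\in\Z[\lambda,X]$ for $n$ odd, while $\psi_n=2Y\,\widetilde\psi_n$ with $\widetilde\psi_n\in\Z[\lambda,X]$ for $n$ even; put $\widetilde\psi_n:=\psi_n$ when $n$ is odd. The only point is bookkeeping on the parities of the indices in the recursions: in $\psi_{2k+1}=\psi_{k+2}\psi_k^3-\psi_{k-1}\psi_{k+1}^3$ the pairs $\{k+2,k\}$ and $\{k-1,k+1\}$ each consist of like-parity indices, of parity opposite to the other pair, so $Y$ occurs to an even power in every monomial; and in $\psi_{2k}=\tfrac{1}{2Y}\psi_k(\psi_{k+2}\psi_{k-1}^2-\psi_{k-2}\psi_{k+1}^2)$ one has that $2Y$ divides the bracketed product times $\psi_k$, with quotient in $2Y\,\Z[\lambda,X]$. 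Eliminating the even powers of $Y$ via $Y^2=D$ gives recursions wholly inside $\Z[\lambda,X]$: for even indices $\widetilde\psi_{2k}=\widetilde\psi_k\bigl(\widetilde\psi_{k+2}\widetilde\psi_{k-1}^2-\widetilde\psi_{k-2}\widetilde\psi_{k+1}^2\bigr)$, and for odd indices $\widetilde\psi_{2k+1}$ equals $16D^2\,\widetilde\psi_{k+2}\widetilde\psi_k^3-\widetilde\psi_{k-1}\widetilde\psi_{k+1}^3$ or $\widetilde\psi_{k+2}\widetilde\psi_k^3-16D^2\,\widetilde\psi_{k-1}\widetilde\psi_{k+1}^3$ according as $k$ is even or odd, the factor $16D^2$ always attached to the product of the even-indexed $\widetilde\psi$'s. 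Since $\deg\psi_n=\deg\widetilde\psi_n+O(1)$ and $h(\psi_n)=h(\widetilde\psi_n)+O(1)$, it suffices to bound $\deg\widetilde\psi_n$ and $h(\widetilde\psi_n)$.

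\emph{Degree.} The recursions give $\deg\widetilde\psi_{2k+1}\le\max(\deg\widetilde\psi_{k+2}+3\deg\widetilde\psi_k,\ \deg\widetilde\psi_{k-1}+3\deg\widetilde\psi_{k+1})+6$ and $\deg\widetilde\psi_{2k}\le\deg\widetilde\psi_k+\max(\deg\widetilde\psi_{k+2}+2\deg\widetilde\psi_{k-1},\ \deg\widetilde\psi_{k-2}+2\deg\widetilde\psi_{k+1})$. Feeding in an ansatz $\deg\widetilde\psi_m\le Am^2$ yields $A(4k^2+4k+4)+6$ on the right of the first inequality, which misses the desired $A(2k+1)^2$ by an additive constant; this small mismatch is the only nuisance. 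It is killed either by running the induction with $\deg\widetilde\psi_n\le An^2-Bn$ for a suitable $B\ge A$ (so the surplus is absorbed once $n$ passes a bounded threshold, the finitely many smaller $n$ checked directly), or, more cheaply, by proving only $\deg\widetilde\psi_n\le A_\varepsilon n^{2+\varepsilon}$ for each fixed $\varepsilon>0$ — already enough for $\deg\psi_n=n^{2+o(1)}$ — using that $(2k)^{2+\varepsilon}=2^{2+\varepsilon}k^{2+\varepsilon}$ has leading constant $2^{2+\varepsilon}>4$.

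\emph{Height.} Write $H_n:=h(\widetilde\psi_n)$. Applying Lemma~\ref{lem3.11} to the difference of the two summands and Lemma~\ref{lem3.1} (with the number of variables equal to $2$, so the correction factor is $\log 3$, and using $h(D)=0$, $\deg D=3$, and the degree bound just proved) to each product, one obtains
\begin{align*}
H_{2k+1} &\le \max\bigl(H_{k+2}+3H_k,\ H_{k-1}+3H_{k+1}\bigr)+O(k^2),\\
H_{2k} &\le H_k+\max\bigl(H_{k+2}+2H_{k-1},\ H_{k-2}+2H_{k+1}\bigr)+O(k^2),
\end{align*}
the $O(k^2)$ absorbing all the $\log 3\cdot(\text{sum of degrees})$ corrections together with the constant $\log 16$. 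These are the degree recursions plus an \emph{unavoidable} additive $O(n^2)$ at each doubling, so a clean $H_n\le An^2$ cannot close — and this is the main obstacle. The remedy is the same device as above: for fixed $\varepsilon>0$, the ansatz $H_n\le A_\varepsilon n^{2+\varepsilon}$ turns the right-hand sides into $4A_\varepsilon k^{2+\varepsilon}+O_{A_\varepsilon}(k^{1+\varepsilon})+O(k^2)$, whereas the target $A_\varepsilon(2k)^{2+\varepsilon}=4\cdot2^{\varepsilon}A_\varepsilon k^{2+\varepsilon}$ exceeds this by $4(2^{\varepsilon}-1)A_\varepsilon k^{2+\varepsilon}$, which dominates the error terms once $k$ is large; choosing $A_\varepsilon$ large enough to absorb the bounded range of small $n$ closes the induction, and letting $\varepsilon\to0$ gives $h(\psi_n)=H_n+O(1)=n^{2+o(1)}$. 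In carrying this out one must only verify that passing to $Y^2=D$ does not inflate heights (it does not, $D$ being a fixed polynomial of height $0$ and bounded degree) and that the degrees feeding the $\log 3\cdot(\text{sum of degrees})$ terms are genuinely $O(n^2)$ — which is exactly why the degree estimate has to be secured before attacking the height.
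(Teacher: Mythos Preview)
Your proposal is correct and follows essentially the same route as the paper: strong induction on the defining recursions, with the key device of proving $\deg\psi_n,\,h(\psi_n)\le c(\varepsilon)n^{2+\varepsilon}$ for each fixed $\varepsilon>0$ (so that the multiplicative headroom $2^{\varepsilon}>1$ absorbs the lower-order error at each doubling), and with the height argument relying on the degree bound already established to control the $\log 3\cdot(\text{sum of degrees})$ corrections from Lemma~\ref{lem3.1}. The only cosmetic difference is that you first pass to the auxiliary polynomials $\widetilde\psi_n\in\Z[\lambda,X]$ by eliminating $Y$ via $Y^2=D$, whereas the paper works directly with $\psi_n\in\Z[\lambda,X,Y]$; this introduces your extra $+6$ from $\deg(16D^2)$ in the odd-index degree recursion, but that is harmless and the two arguments are otherwise identical.
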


 \subsection{Proof of Theorem~\ref{th2.3}}
 By Lemma~\ref{lem3.2} there is an absolute constant $C$ such that the components of any pair 
$$(P_1,P_2)\in E_{1}(\bar{\mathbb{Q}})^{\text{tors}}\times E_{2}(\bar{\mathbb{Q}})^{\text{tors}}
$$ 
with $x(P_1)=x(P_2)$ are torsion points of order at most $C$.  
Let us fix some $\varepsilon > 0$. 
We assume that $z$ is large enough and fixed so that 
\begin{equation}
\label{eq:Ldef1}
L =z^{1/6- \varepsilon} >C.
\end{equation} 
Consider the curve
$$E_1: Y=X(X-1)(X-t_1).$$
With notation as in Section~\ref{sec:division},  let $\psi_n$ be the division polynomials for the curve $E_1$ and define 
$$
f_n=\begin{cases} \psi_n, &  \quad  \text{if $n$ odd;} \\  
\psi_n/2Y, & \quad  \text{if $n$ even.}  \end{cases}
$$ 
By  Lemma~\ref{lem:polyz} we have
$f_n \in \mathbb{Z}[X]$. 
 By Lemma~\ref{lem:torsion1}, the vanishing of $f_n(X)$ for $n$ odd or of $2Yf_n(X)$ for $n$ even characterises the kernal $[n]$ of $E_1$. Define $g_n \in \mathbb{Z}[X]$ in a similar fashion for the curve $E_2$, so that the vanishing of $g_n(X)$ for $n$ odd or of $2Yg_n(X)$ for $n$ even characterises the kernel $[n]$ of $E_2$.

  From Lemma~\ref{lem:div poly}, one has  
\begin{align*}
& \deg f_n(X), \deg g_n(X)  \le n^{2+o(1)},   \quad  \text{if $n$ is odd},  \\  & \deg (Y f_n(X)), \deg (Y g_n(X)) \le n^{2+o(1)},\quad \text{if  $n$ is even},
\end{align*} 
and 
$$
h(f_n), h(g_n) \leq n^{2+o(1)}.
$$ 
We can also see that
 \begin{align*}
 &  \deg \prod_{l=C+1}^L f_l(X) = \sum_{l=C+1}^L \deg f_l(X) \le L^{3+o(1)},\\
  & \deg   \prod_{l=C+1}^L g_l(X)  = \sum_{l=C+1}^L \deg g_l(X) \le L^{3+o(1)}. 
 \end{align*}   
Furthermore, by Lemma~\ref{lem3.1},
 \begin{align*}
  & h\left(\prod_{l=C+1}^L f_l(X) \right) \leq\sum_{l=C+1}^L  h(f_l(X)) +\log{2}\sum_{l=C+1}^L  \deg f_l(X)  \le  L^{3+o(1)},\\
  & h\left(\prod_{l=C+1}^L g_l(X) \right) \leq\sum_{l=C+1}^L  h(g_l(X)) +\log{2}\sum_{l=C+1}^L  \deg g_l(X)  \le  L^{3+o(1)}.
 \end{align*} 

 Let 
 $$
 \fR = \left| \Res\left(\prod_{l=C+1}^L f_l(X),\prod_{l=C+1}^L g_l(X)\right)\right |.
 $$ 
 We see from the choice of $C$ that $\fR \ne 0$.  If this were false then for some $X_0\in \ov \Q$ and $\ell,k$ satisfying 
$$C+1\le \ell,k\le L,$$ we have 
$$f_{\ell}(X_0)=g_{k}(X_0)=0.$$
By construction of $f_{\ell},g_k$, there exists  
$$(P_1,P_2)\in E_{1}(\bar{\mathbb{Q}})^{\text{tors}}\times E_{2}(\bar{\mathbb{Q}})^{\text{tors}},$$ with 
$$x(P_1)=x(P_2)=X_0,$$
and $P_1,P_2$ have orders $\ell,k$ respectivley. Since $\ell,k\ge C+1$, this contradicts our choice of $C$  and thus $\fR \ge 1$.

 Applying Lemma~\ref{lemres} with 
 $$
 A(X)=\prod_{l=C+1}^L f_l(X) \mand B(X)=\prod_{l=C+1}^L g_l(X)
 $$ 
 gives
 \begin{equation}
\label{eq:resboundelliptic}
 \log\fR  \le L^{6+o(1)}.
 \end{equation}
For integer $n$ let $\omega(n)$ count the number of distinct prime divisors  of $n$.
 Combining the classic estimate 
$$\omega(n)\ll \frac{\log{n}}{\log\log(n+2)},
$$
(which follows from the trivial inequality $\omega(n)! \le n$ and the Stirling formula)
with~\eqref{eq:resboundelliptic}, we see that the number $E$ of exceptional primes $p$ satisfying
\begin{equation}
\label{eq:presdiv}
p \mid \Res\left(\prod_{l=C+1}^L f_l(X),\prod_{l=C+1}^L g_l(X)\right),
\end{equation}
is at most 
 $$
E \ll  \frac{\log (\fR+1)}{\log  \log \(\fR +2\) } \le L^{6+o(1)}.
$$
 Recalling~\eqref{eq:Ldef1}, we see that 
  there are at most 
 $$
 L^{6+o(1)} \le z^{1-6\varepsilon +o(1)} = o(z/\log z)
 $$ primes $p \leq z$ satisfying~\eqref{eq:presdiv}.

By construction, for all the other remaining primes $p$ not dividing the conductors of $E_1$ and $E_2$, 
   the reduced elliptic curves $E_{1,p}$ and $E_{2,p}$ do not have any points $(\bar{P}_1,\bar{P}_2) \in E_{1,p}\times E_{1,p}$ with $x(\bar{P}_1)=x(\bar{P}_2)$ and $C+1 \leq \max \{ \text{ord } P_1, \text{ord } P_2\} \leq L$.
Thus, for such primes, for every pair of points $(\bar{P}_1,\bar{P}_2) \in E_{1,p}\times E_{1,p}$ with $x(\bar{P}_1)=x(\bar{P}_2)$ and 
$$\max \{ \text{ord } P_1, \text{ord } P_2\}>C,
$$ 
we have 
$$
\max \{ \text{ord } P_1, \text{ord } P_2\} \geq L= z^{1/6-\varepsilon}.
$$ Finally, there are at most $C^2C^2=C^4$ points in $E_1\times E_2$ with components of order at most $C$. Taking $C_0=C^4$,  and taking into account that $\varepsilon$ is arbitrary,  we conclude the proof.

\subsection{Proof of Theorem~\ref{th2.9}}
Throughout the proof, all constants depend only on  $f_1, \ldots, f_n$.
 For simplicity, we suppose that the heights of $f_1, \ldots, f_n$ are at most $h$. 
%
%

  We see from Lemma~\ref{lem:multidep} that there is  
 a constant $B$, which depends only on $f_1, \ldots, f_n$,  such that for any $x \in \overline{\mathbb{Q}}$ satisfying 
$$
f_1^{k_1}(x)\ldots f_n^{k_n}(x)=f_1^{\ell_1}(x)\ldots f_n^{\ell_n}(x)=1 
$$
 for some linearly independent  integer vectors 
$(k_1,\ldots ,k_n), (\ell_1,\ldots ,\ell_n)$ implies $ \ord (f_1(x),\ldots ,f_n(x)) \leq B$. 

We choose some large real number $z$ and define 
\begin{equation}
\label{eq:Ldef2}
L =\varepsilon(z)z^{1/(2n+2)}  
\end{equation} 
 Without loss of generality we can assume that  $\varepsilon(u)u^{1/(2n+2)}$ 
is a monotonically increasing  function of $u$ and so $L \ge \varepsilon(p)p^{1/(2n+2)}$ for any $p\le z$.
We also assume that $z$ is large enough and fixed so that 
$$
L  >B.
$$

Also for  a vector $\vec{k} = (k_1,\ldots ,k_n) \in \mathbb{Z}^n \setminus \{ \vec{0} \}$  we define
 \begin{equation}  
\label{eq:height-def}
\|\vec{k}\|_\infty = \max_{ i =1, \ldots, n} |k_i|.
 \end{equation}

Suppose each $f_i$ is of the form  
$$f_i=P_i/Q_i,\qquad i =1, \ldots, n,
$$ 
with coprime $P_i,Q_i \in \mathbb{Z}[X]$. 

   Let $\vec{k} = (k_1,\ldots ,k_n) \in \mathbb{Z}^n \setminus \{ \vec{0} \}$ satisfy
 \begin{equation}  
\label{eq:k-height}
\|\vec{k}\|_\infty \leq L.
 \end{equation}
We partition the multiset of components of $\vec{k}$ as  follows
 \begin{align*} 
& \{k_{u_1},\ldots , k_{u_r} \} = \{k_1,\ldots ,k_n \} \cap \mathbb{Z}_{\geq 0} , \\  
&  \{k_{v_1},\ldots , k_{v_s} \} = \{k_1,\ldots ,k_n \} \cap \mathbb{Z}_{<0} , 
 \end{align*} 
 and define
  \begin{equation} 
   \begin{split} 
\label{eq:Gk}
 G_\vec{k}(X) =    \prod_{i=1}^r   \prod_{j=1}^s  &P_{u_i}^{k_{u_i}}(X)Q_{v_j}^{-k_{v_j}}(X) \\
 & - 
 \prod_{i=1}^r \prod_{j=1}^s Q_{u_i}^{k_{u_i}}(X)   P_{v_j}^{-k_{v_j}}(X) \in \Z[X].
  \end{split}
  \end{equation}

 Using Lemmas~\ref{lem3.1}, and~\cite[Equation~(3.1)]{DOSS}, 
   we see that for $\vec{k}$ satisfying~\eqref{eq:k-height} we have 
  \begin{equation}\label{eq:deh h Gk}
  \deg  G_\vec{k}    \ll  L \mand   h\(  G_\vec{k} \)  \ll L.
  \end{equation} 
  

Define the set 
$$
\mathscr{M}=\{ (\vec{k},\vec{l})\in \Z^{n}\times \Z^{n} \ : \ \|\vec{k}\|_{\infty}, \|\vec{l}\|_{\infty}\le L \ \},
$$
so that
\begin{equation}
\label{eq:L0bound}
|\mathscr{M}|=O\left(L^{2n}\right).
\end{equation}
We recall the definition~\eqref{eq:height-def} and note that each such pair 
$$(\vec{k},\vec{l})=(k_1,\ldots ,k_n,\ell_1,\ldots ,\ell_n)\in \sM
$$ 
with 
$$
\|\vec{k}\|_\infty, \|\vec{l}\|_\infty  \ll L
$$  
leads to two polynomials whose set of common zeros is the set of elements in $x \in \overline{\mathbb{Q}}$ such that the multiplicative dependence relations 
\begin{equation}
\label{eq:fk=fl=1}
   \prod_{i=1}^n  f_i^{k_i}(x)=   \prod_{i=1}^n f_i^{\ell_i}(x)=1
\end{equation}
are satisfied.

 We recall the definition~\eqref{eq:Gk} and 
 for each $ \vec{k}, \vec{l} \in \mathscr{M}$ consider the following union of zero sets ranging over $\mathscr{M}$
$$
\bigcup_{(\vec{k}, \vec{l}) \in\sM} \{x \in \overline{\mathbb{Q}} :~G_\vec{k}(x)= G_\vec{l}(x)=0 \}. 
$$
 This set is finite  due to Lemma~\ref{lem:multidep} as it   differs from the union of zeros of the system~\eqref{eq:fk=fl=1}
  only by a subset of the set of zeros or poles of $f_1,\ldots, f_n$.  
More precisely,  in order to avoid counting  the zeros or poles coming from powers of the $P_i,Q_i$,  we define
 \begin{align*}
& \widetilde G_\vec{k}=G_\vec{k}/\gcd\(G_\vec{k}, \prod_{i=1}^n \(P_i Q_i\)^{O(L)}\), \\
& \widetilde G_\vec{l}=G_\vec{l}/\gcd\(G_\vec{l},  \prod_{i=1}^n \(P_i Q_i\)^{O(L)}\),
 \end{align*}
and consider 
$$
\bigcup_{(\vec{k}, \vec{l}) \in \mathscr{M}} \{x \in \overline{\mathbb{Q}} :~\widetilde G_\vec{k}(x)=\widetilde G_\vec{l}(x)=0 \}. 
$$
As in~\eqref{eq:deh h Gk}, we see that we trivially have
\begin{equation}
\label{eq:degHG} 
 \deg \widetilde G_\vec{l}, \deg \widetilde G_\vec{k}   \ll  L 
\end{equation}
and also   by Lemma~\ref{lem3.1} 
\begin{equation}
\label{eq:heightHG}
h\left(\widetilde G_\vec{k}\right), h\left(\widetilde G_\vec{l}\right)\le h(G_\vec{k})+O\left(nLd \right)\ll L.
\end{equation} 

 Applying Lemma~\ref{lemres} with $A=\widetilde G_\vec{k}$ and $B=\widetilde G_\vec{l}$ and using~\eqref{eq:degHG} and ~\eqref{eq:heightHG} gives
$$
\log |\Res(\widetilde G_\vec{k},\widetilde G_\vec{l})| \ll L^2.
$$  
Arguing as in the proof of Theorem~\ref{th2.3}, there are at most
$$
O\left(\dfrac{\log |\Res(\widetilde G_\vec{k},\widetilde G_\vec{l})|+1}{\log \left(\log |\Res(\widetilde G_\vec{k},\widetilde G_\vec{l})| +2 \right) } \right) =  O\left( \dfrac{L^2}{\log L} \right)
$$
 primes $p \mid \Res(\widetilde G_\vec{k},\widetilde G_\vec{l})$. By~\eqref{eq:L0bound} there are at most
$ O(L^{2(n+1)}/\log L)$  primes dividing $\Res(G_\vec{k},G_\vec{l})$ for some $(\vec{k}, \vec{l}) \in \sM$. 

We also need to exclude primes $p$ such that the polynomials  
$$
 \widetilde G_\vec{k} \widetilde G_\vec{l} \mand \prod_{i=1}^n P_i Q_i,
$$ 
have a common zero over $\ov \F_p$ for some $(\vec{k},\vec{l})\in \sM$. Consider the resultants
 $$
 \fR_{\vec{k},\vec{l}} = \left| \Res\( \widetilde G_\vec{k} \widetilde G_\vec{l},   \prod_{i=1}^n  P_iQ_i \) \right |.
 $$
 By our construction $\fR_{\vec{k},\vec{l}} \ne 0$ for each $(\vec{k},\vec{l})\in \sM$. 
Arguing as above, we show 
$$\fR=\prod_{(\vec{k},\vec{l})\in \sM}\fR_{\vec{k},\vec{l}},$$ has a small number of prime divisors. We have
$$
\deg\prod_{i=1}^n  P_iQ_i \mand h\left(\prod_{i=1}^n  P_iQ_i\right) \ll 1,
$$
with implied constant depending only on $n,d,h$. Using~\eqref{eq:heightHG},~\eqref{eq:degHG}  we derive
$$
 \deg \left(\widetilde G_\vec{k} \widetilde G_\vec{l}    \right) \ll L
\mand 
 h \left(  \widetilde G_\vec{k} \widetilde G_\vec{l}    \right)  \ll L.
$$
  Hence by Lemma~\ref{lemres}
$$
\log \fR_{\vec{k},\vec{l}} \ll L,
$$
which by~\eqref{eq:L0bound} implies 
$$
\log \fR\le \sum_{(\vec{k},\vec{l})\in \sM}\log \fR_{\vec{k},\vec{l}}\ll L^{2n+1}, 
$$
and hence 
$$
\frac{\log (\fR+1)}{\log  \log (\fR+2)} \ll  \frac{L^{2n+1}}{\log L}.
$$
This implies that there are at most $O(L^{2n+2}/\log L)$ primes satisfying 
$$
p\mid \Res(\widetilde G_\vec{k},\widetilde G_\vec{l}) \quad  \text{for some $(\vec{k},\vec{l}) \in \mathscr{L}$} \qquad \text{or} \qquad   p\mid \fR.
$$

By~\eqref{eq:Ldef2}, there are at most $o(z/\log z)$ primes $p\leq z$ satisfying the above properties.
By construction, for all the other remaining primes $p$, there is no $x \in\ov \F_p$ satisfying 
 \begin{equation}\label{eq:fin cong}
 f_1^{k_1}(x)\ldots f_n^{k_n}(x)\equiv f_1^{\ell_1}(x)\ldots f_n^{\ell_n}(x) \equiv 1 \pmod  p
 \end{equation} 
for some linearly independent  integer vectors 
$(k_1,\ldots ,k_n), (\ell_1,\ldots ,\ell_n)\in \Z^n$  with $ B+1\leq \ord (f_1(x),\ldots ,f_n(x))  \leq L$.
In this case, we have 
$$
\ord (f_1(x),\ldots ,f_n(x)) \geq L= \varepsilon(z)z^{1/(2n+2)}.
$$ Finally,  for such primes,  there are at most $C_0 = dnB(2B)^n$ values of 
$x \in\ov \F_p$ satisfying a congruence~\eqref{eq:fin cong}  
 for some linearly independent vectors $(k_1,\ldots ,k_n), (\ell_1,\ldots ,\ell_n)\in \Z^n\cap [-B,B]^n$. 
 This implies there are at most $C_0$ values of $x \in\ov \F_p$  satisfying  $ \ord (f_1(x),\ldots ,f_n(x))  \leq L$.

\appendix
\section{Properties of division polynomials of Legendre curves} 
\label{app:div poly} 

Here we reproduce the proof of several results from~\cite[Chapter~4]{Ho} which
together yield  Lemma~\ref{lem:div poly}. 

\begin{lemma} \label{lem:polyz}
We have	$\psi_n \in \Z[\lambda, X, Y]$ and for an even $n=2k$ we also have $\psi_{2k} Y^{-1} \in \Z[\lambda, X, Y]$. 
\end{lemma}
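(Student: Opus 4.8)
The plan is to prove, by strong induction on $n$, the parity‑refined statement
\begin{align*}
\psi_n &\in \Z[\lambda,X] \qquad\text{if $n$ is odd},\\
\psi_n &\in 2Y\,\Z[\lambda,X] \qquad\text{if $n$ is even};
\end{align*}
the lemma then follows at once, and for even $n=2k$ it in fact yields $\psi_{2k}Y^{-1}\in 2\,\Z[\lambda,X]$, which is even stronger than the stated $\psi_{2k}Y^{-1}\in\Z[\lambda,X,Y]$. Passing to this refined form is essential: the bare assertion $\psi_n\in\Z[\lambda,X,Y]$ records nothing about the factor $2Y$ that must be cancelled in the even recursion, whereas knowing the exact $Y$-parity — together with one spare factor of $2$ in the even case — is precisely what the recursion consumes.

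Before the induction I would fix the ambient ring. Set $A=\Z[\lambda,X]$ and
\[
B=A[Y]/\bigl(Y^2-X(X-1)(X-\lambda)\bigr),
\]
a free $A$-module with basis $\{1,Y\}$. Since $2Y$ is a nonzerodivisor in $B$, the recursions — including the division by $2Y$ in the even case — define the $\psi_n$ unambiguously as elements of the localization $B[(2Y)^{-1}]$, and the whole point is to show they already lie in $B$ in the refined form above. The base cases $n=0,1,2,3,4$ are read straight off the listed initial values: $\psi_0=0$, $\psi_2=2Y$ and $\psi_4=2Y(\cdots)$ lie in $2YA$, while $\psi_1=1$ and $\psi_3=3X^4-4(1+\lambda)X^3+6\lambda X^2-\lambda^2$ lie in $A$. (Note that $\psi_2,\psi_3,\psi_4$ have to be taken as base cases, since their own recursions degenerate.)

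For the inductive step, let $n\ge 5$. If $n=2k+1$ with $k\ge 2$, then in each of $\psi_{k+2}\psi_k^3$ and $\psi_{k-1}\psi_{k+1}^3$ all four division polynomials carry indices of one common parity; by the induction hypothesis the product lies in $A$ if that parity is odd, and if it is even then it is divisible by $Y^4=\bigl(X(X-1)(X-\lambda)\bigr)^2\in A$, so again lies in $A$ — hence $\psi_{2k+1}\in A$. If $n=2k$ with $k\ge 3$, I would check that the numerator $N=\psi_k\bigl(\psi_{k+2}\psi_{k-1}^2-\psi_{k-2}\psi_{k+1}^2\bigr)$ lies in $4Y^2A=4X(X-1)(X-\lambda)A$: if $k$ is even then $\psi_{k+2},\psi_{k-2},\psi_k\in 2YA$ and $\psi_{k-1}^2,\psi_{k+1}^2\in A$, so the bracket lies in $2YA$ and $N\in(2YA)(2YA)=4Y^2A$; if $k$ is odd then $\psi_{k+2},\psi_{k-2},\psi_k\in A$ while $\psi_{k-1}^2,\psi_{k+1}^2\in 4Y^2A$, so the bracket, and hence $N$, again lies in $4Y^2A$. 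Writing $N=4Y^2M$ with $M\in A$ gives $\psi_{2k}=N/(2Y)=2YM\in 2YA$, closing the induction.

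The one genuinely delicate point — and the place I would concentrate the write‑up — is the integrality of the division by $2Y$ in the even step: it is not enough that the numerator be divisible by $Y$ in the function field, one must verify that the quotient still has integer coefficients and that the missing factor $2$ is supplied. Carrying the hypothesis as ``$\psi_n\in 2YA$'' (not merely ``$\psi_n\in YA$'') for even $n$, and doing the matching count — two factors of $2Y$ in the numerator against the single $(2Y)^{-1}$ — is exactly what settles this; the rest is routine bookkeeping.
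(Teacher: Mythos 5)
Your proof is correct and follows essentially the same route as the paper's: induction on $n$ via the recursions, with a parity case analysis tracking the factor $2Y$ carried by the even-index polynomials. If anything, your write-up is more complete --- the paper dispatches the odd case with a one-line appeal to induction and treats $\psi_6$ as a warm-up computation, whereas you make the ambient ring, the nonzerodivisor issue, and the bookkeeping of the factor $4Y^2=4X(X-1)(X-\lambda)$ explicit.
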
  

\begin{proof} The  obviously holds for $n \le 4$.  By induction, it also holds for $n = 2k+1$, $k = 2,3,\ldots$.

Let
$$
\psi_4 = (2Y)f \quad\text{where }f \in \Z[\lambda,X]. 
$$
Then
	\begin{align*}
	\psi_6 &= (2Y)^{-1}\psi_3(\psi_5\psi^2_2 - \psi_1\psi^2_4) \\
	&= (2Y)^{-1}(\psi_5(2Y)^2 - (2Y)^2f^2) \\
	&= (2Y)(\psi_5-f^2).
	\end{align*}
	We can therefore suppose that $\psi_{2k}$ has a factor of $2Y$.  Since
	\[ \psi_{2k+2} = (2Y)^{-1}\psi_{k+1}(\psi_{k+3}\psi^2_k - \psi_{k-1}\psi^2_{k+2}), \]
	we have 2 possible cases :	
	\begin{itemize}
		\item if $k$ is odd, then we get a factor of $2Y$ from $\psi_{k+1}, \psi_{k+3}$ and $\psi_{k-1}$, which, after cancellation, leaves $2Y$ as a factor of $\psi_{2k+2}$
		\item if $k$ is even, then we get a factor of $(2Y)^2$ from $\psi^2_k$ and $\psi^2_{k+2}$, which, after cancellation, also leaves $2Y$ as a factor of $\psi_{2k+2}$.
	\end{itemize}
	
	This proves that $\psi_{2k} = (2Y)g$ where $g \in \Z[\lambda, X, Y]$.  Hence $\psi_n \in \Z[\lambda, X, Y]$.
\end{proof}
\begin{lemma} \label{lem:polydeg} We have, 
	$\deg \psi_n \le n^{2+o(1)}.$
\end{lemma}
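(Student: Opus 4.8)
The plan is to prove the degree bound $\deg \psi_n \le n^{2+o(1)}$ by strong induction on $n$, using the recursive definition of the division polynomials. First I would record the degrees of the base cases: $\deg\psi_0=0$, $\deg\psi_1=0$, $\deg\psi_2=1$, $\deg\psi_3=4$, $\deg\psi_4\le 7$, where by $\deg$ I mean the total degree in the variables $\lambda, X, Y$ (treating $Y$ as having degree $1$, so that $Y^2 = X(X-1)(X-\lambda)$ must be used to reduce whenever convenient, but for an upper bound we need not reduce). The point is that the recursion roughly squares the index, so the degree should grow quadratically in $n$; the $o(1)$ in the exponent is there only to absorb the polynomial-in-$\log n$ overhead coming from the fact that each recursive step multiplies several $\psi_j$'s together and we lose a little in the bookkeeping of the constants.

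The key step is to set up a clean inductive hypothesis. I would guess a bound of the shape $\deg\psi_n \le c\, n^2$ for a suitable absolute constant $c$ (say $c = 1$ works after checking small cases, or at worst some explicit constant), and verify it propagates through both branches of the recursion:
\begin{align*}
\psi_{2k+1}&=\psi_{k+2}\psi_k^3-\psi_{k-1}\psi_{k+1}^3,\\
\psi_{2k}&=\tfrac{1}{2Y}\psi_k\bigl(\psi_{k+2}\psi_{k-1}^2-\psi_{k-2}\psi_{k+1}^2\bigr).
\end{align*}
For the odd branch, $\deg\psi_{2k+1}\le \max\{\deg\psi_{k+2}+3\deg\psi_k,\ \deg\psi_{k-1}+3\deg\psi_{k+1}\}\le c(k+2)^2+3ck^2$, and one checks $c(k+2)^2+3ck^2 \le c(2k+1)^2$ for all $k$ large enough (expanding: $4ck^2+4ck+4c \le 4ck^2+4ck+c$ fails, so in fact one needs either to allow the constant to be absorbed into the $o(1)$ or to carry a slightly weaker bound like $c n^{2+\delta_n}$ with $\delta_n\to 0$ — this is exactly why the statement is phrased with $n^{2+o(1)}$ rather than $O(n^2)$). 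For the even branch, $2Y$ divides $\psi_{2k}$ by Lemma~\ref{lem:polyz}, so $\deg\psi_{2k}\le \deg\psi_k + \max\{\deg\psi_{k+2}+2\deg\psi_{k-1},\ \deg\psi_{k-2}+2\deg\psi_{k+1}\} - 1$, and a similar estimate applies (note the division by $2Y$ only helps, lowering the degree by $1$).

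The main obstacle is precisely the constant tracking: a naive induction with a fixed constant $c$ and bound $c n^2$ does not quite close because the recursion, when we pass from indices near $k$ to index $2k$, produces something of degree slightly more than $c(2k)^2$ due to the four-fold product structure. The clean fix, which I expect the authors use, is to prove instead a bound of the form $\deg\psi_n \le A^{\log_2 n}\, n^2$ for a suitable constant $A$, or equivalently to let the implied multiplicative constant grow like $n^{o(1)}$: if $D(n)$ denotes $\deg\psi_n$ and $D(n)\le \phi(n)$ where $\phi$ satisfies $\phi(2k)\le \max$ of sums of $\phi$ at arguments $\le k+2$ of total "weight" $4$, then iterating the recursion $\log_2 n$ times multiplies the bound by at most a bounded factor each time, giving $\phi(n)\le 4^{\log_2 n + O(1)} = n^{2+o(1)}$. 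I would phrase the induction in terms of $\psi_m$ for all $m\le n$, split into the cases $n=2k$ and $n=2k+1$, apply the recursions above together with the additivity of degree under products, invoke Lemma~\ref{lem:polyz} to handle the $(2Y)^{-1}$ factor in the even case, and carry the bound $\deg\psi_m \le C_1^{\log(m+2)} m^2 = m^{2+o(1)}$ through both branches; the verification that this survives each recursive step is a short computation expanding $(k+2)^2$, $(k-1)^2$, etc., against $(2k+1)^2$ and $(2k)^2$.
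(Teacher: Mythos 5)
Your proposal is correct and follows essentially the same route as the paper: strong induction through the two branches of the recursion, additivity of degree under products, and — exactly as you diagnose — the observation that a fixed bound $cn^2$ does not close, so one must carry exponent slack. The paper formalizes your "$cn^{2+\delta_n}$ with $\delta_n\to0$" fix by proving, for each fixed $\varepsilon>0$, a bound $\deg\psi_n\le c(\varepsilon)n^{2+\varepsilon}$ by induction (the factor $2^{\varepsilon}$ gained per doubling of the index absorbs the $(1+O(1/k))^{2+\varepsilon}$ loss once $k$ is large, and small $n$ are absorbed into $c(\varepsilon)$), which is the precise version of your sketch.
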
  

\begin{proof}
This bound is equivalent to the statement that for each $\varepsilon>0$, there exists some constant $c(\varepsilon)$ such that for each $n=1,2,\ldots$ we have 
\begin{equation}
\label{eq:toprovedeg}
\deg \psi_n\le c(\varepsilon)n^{2+\varepsilon}.
\end{equation}
We prove~\eqref{eq:toprovedeg} by induction. Fix some $\varepsilon>0$ and choose $n_0$ large enough so that 
for 
$$
 k\ge \frac{n_0-1}{2}
 $$ 
 we have 
\begin{equation}
\label{eq:nchoice11}
\frac{(k+2)^{2+\varepsilon}}{2^{\varepsilon}(k+1/2)^{2+\varepsilon}},\  \frac{(k+1)^{2+\varepsilon}}{2^{\varepsilon}k^{2+\varepsilon}}<1.
\end{equation} 
Define $c(\varepsilon)$ by 
$$
c(\varepsilon)=\max_{n\le n_0}\frac{\deg \psi_n}{n^{2+\varepsilon}}.
$$
With this choice of $c(\varepsilon)$ the inequality~\eqref{eq:toprovedeg} trivially holds for $n\le n_0$ which forms the basis of our induction. Suppose~\eqref{eq:toprovedeg} is true for all integers $n<m$ for some $m>n_0$. Consider $m$ even or odd separatley. If $m=2k$ is even, then  
\begin{align*}
\deg \psi_{2k} & \le   \max\{ \deg \psi_k+\deg\psi_{k+1}+2\deg\psi_{k-1},\\
& \qquad \qquad \qquad \deg \psi_k+\deg \psi_{k-2}+2\deg \psi_{k+1}\}.
\end{align*}
By our induction hypothesis and~\eqref{eq:nchoice11}
$$
\deg \psi_{2k}\le c(\varepsilon)(2k)^{2+\varepsilon}\frac{(k+1)^{2+\varepsilon}}{2^{\varepsilon}k^{2+\varepsilon}}<c(\varepsilon)(2k)^{2+\varepsilon}.
$$
If $m=2k+1$ is odd, then by our induction hypothesis and~\eqref{eq:nchoice11}
\begin{align*}
\deg \psi_{2k+1} &\le  \max\{ \deg \psi_{k+2}+3\deg\psi_{k},\deg \psi_{k-1}+3\deg \psi_{k+1}\} \\ 
&\le c(\varepsilon)(2k+1)^{2+\varepsilon}\frac{(k+2)^{2+\varepsilon}}{2^{\varepsilon}(k+1/2)^{2+\varepsilon}}<c(\varepsilon)(2k+1)^{2+\varepsilon},
\end{align*}
which implies~\eqref{eq:toprovedeg} and concludes the proof. 
\end{proof}

\begin{lemma} \label{lem:polyht}
	$h(\psi_n) \le n^{2+o(1)}$ 
\end{lemma}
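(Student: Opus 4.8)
The plan is to prove the height bound $h(\psi_n) \le n^{2+o(1)}$ by the same kind of induction already used for the degree bound in Lemma~\ref{lem:polydeg}, but now tracking the size of the coefficients rather than the degree. First I would reformulate the claim in the effective form: for each $\varepsilon>0$ there is a constant $c(\varepsilon)$ so that $h(\psi_n)\le c(\varepsilon)n^{2+\varepsilon}$ for all $n\ge 1$. The recursions
\[
\psi_{2k+1}=\psi_{k+2}\psi_k^3-\psi_{k-1}\psi_{k+1}^3,\qquad
2Y\psi_{2k}=\psi_k(\psi_{k+2}\psi_{k-1}^2-\psi_{k-2}\psi_{k+1}^2)
\]
express $\psi_m$ (or $\psi_m Y^{-1}$ when $m$ is even, using Lemma~\ref{lem:polyz}) as a sum of two products of four previously constructed division polynomials.

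The key estimate is the behaviour of height under products and sums. By Lemma~\ref{lem3.1}, a product of polynomials has height at most the sum of the heights plus $\log(n+1)$ times the sum of the degrees; by Lemma~\ref{lem3.11}, a sum of two polynomials has height at most the larger of the two heights plus $\log 2$. Feeding in the already-established degree bound $\deg\psi_j\le c'(\varepsilon)j^{2+\varepsilon}$ from Lemma~\ref{lem:polydeg}, the correction term $\log(n+1)\sum\deg$ contributing to the product of the four factors in each recursion is only $O(n^{2+\varepsilon}\log n)$, which is absorbed into $n^{2+o(1)}$ and in particular is dominated for large $n$. So for $m=2k+1$ one gets, up to the negligible correction,
\[
h(\psi_{2k+1})\le \max\bigl\{h(\psi_{k+2})+3h(\psi_k),\ h(\psi_{k-1})+3h(\psi_{k+1})\bigr\}+O(m^{2+\varepsilon}\log m),
\]
and similarly for $m=2k$. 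By the induction hypothesis each term on the right is at most $c(\varepsilon)$ times a sum of four values $j^{2+\varepsilon}$ with $j\le k+2$, so the total is bounded by $c(\varepsilon)\cdot 4(k+2)^{2+\varepsilon}$ plus the correction; since $4(k+2)^{2+\varepsilon}<(2k+1)^{2+\varepsilon}$ once $k$ is large (indeed $4(k+2)^{2+\varepsilon}/(2k+1)^{2+\varepsilon}\to 1\cdot 4/4=1$ is not quite enough, so one instead uses that $4(k+2)^{2+\varepsilon}\le 4\cdot 2^{-(2+\varepsilon)}(2k+1)^{2+\varepsilon}\cdot(1+o(1))=2^{-\varepsilon}(2k+1)^{2+\varepsilon}(1+o(1))<(2k+1)^{2+\varepsilon}$), the induction closes exactly as in Lemma~\ref{lem:polydeg}. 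One chooses $n_0$ large enough that both the shrinkage factor condition and the absorption of the $O(m^{2+\varepsilon}\log m)$ correction hold for all $m>n_0$, then sets $c(\varepsilon)=\max_{n\le n_0} h(\psi_n)/n^{2+\varepsilon}$.

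The main obstacle, and the only real subtlety compared with the degree argument, is bookkeeping the additive correction terms: unlike degrees, heights pick up a genuinely positive $\log(n+1)\sum\deg\psi_j$ term from Lemma~\ref{lem3.1} at every step of the recursion, and one must check these do not accumulate to something larger than $n^{2+o(1)}$. The resolution is that at stage $m$ this correction is a single term of size $O(m^{2+\varepsilon}\log m)=o(m^{2+2\varepsilon})$ (there is no telescoping blow-up because the recursion is not iterated within a single step), so after replacing $\varepsilon$ by $\varepsilon/2$ at the outset it is swallowed by the gain coming from the strict inequality $2^{-\varepsilon}<1$ in the dominant term. With that observation the proof is a routine two-case induction, after which Lemmas~\ref{lem:polyz}, \ref{lem:polydeg} and~\ref{lem:polyht} together give Lemma~\ref{lem:div poly}.
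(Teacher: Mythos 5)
Your proposal is correct and follows essentially the same route as the paper: an effective reformulation $h(\psi_j)\le c(\varepsilon)j^{2+\varepsilon}$, induction via the even/odd recursions using Lemmas~\ref{lem3.1} and~\ref{lem3.11} together with the degree bound of Lemma~\ref{lem:polydeg} to control the additive correction, and the factor $2^{-\varepsilon}$ coming from comparing $4(k+O(1))^{2+\varepsilon}$ with $(2k)^{2+\varepsilon}$ to close the induction. The paper absorbs the correction term into a $4^{\varepsilon/2}$ factor rather than halving $\varepsilon$ at the outset, but this is a cosmetic difference.
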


\begin{proof}
	The bound clearly holds for $n \le 4$ as $h(\psi_n) \le n \le n^2$. 
	
	This bound is also equivalent to the statement that, for any $\varepsilon > 0$, there exists some constant $c(\varepsilon)$ such that for every $j = 1,2,\ldots$, we have
	\begin{equation} \label{hbound}
	h(\psi_j) \le c(\varepsilon) j^{2 + \varepsilon}. 
	\end{equation}
	We fix some $n > 4$ and assume for induction that~\eqref{hbound} holds for $j < n$. 
	
	For $n = 2k+1$, from Lemmas~\ref{lem3.1} and~\ref{lem3.11} and the bound on the degree of division polynomials 
given in Lemma~\ref{lem:polydeg}, we have
	\[ h(\psi_{2k+1}) \le \max\{ h(\psi_{k+2})+3h(\psi_k), h(\psi_{k-1})+3h(\psi_{k+1}) \} + c_0 k^2 \]
	with some constant $c_0$.  By the induction assumption, we can estimate all heights on the right hand side by $c(\varepsilon)(k+2)^{2+\varepsilon}$ and obtain
	\begin{equation} \label{ineq:1}
	\begin{split}
	h(\psi_{2k+1}) & \le 4c(\varepsilon)(k+2)^{2+\varepsilon} + c_0 k^2 \\
	& = 4c(\varepsilon)k^{2+\varepsilon}\left((1 + 2/k)^{2+\varepsilon} + c_0 k^{-\varepsilon} \right).
	\end{split}
	\end{equation}
	By increasing the value of $c(\varepsilon)$, we can assume that $k$ is large enough such that
	$$ (1 + 2/k)^{2+\varepsilon} + c_0k^{-\varepsilon} \le 4^{\varepsilon/2}.
	$$
	By substituting this in the previous inequality, we get
	\[ h(\psi_{2k+1}) \le 4^{1+\varepsilon/2}c(\varepsilon)k^{2+\varepsilon} = c(\varepsilon)(2k)^{2+\varepsilon} < c(\varepsilon)(2k+1)^{2+\varepsilon}. \]
	
	For $n = 2k$, by the same reasoning, we obtain the same inequality as in~\eqref{ineq:1} and reach the desired inequality
	\[ h(\psi_{2k}) \le c(\varepsilon)(2k)^{2+\varepsilon}. \]
	
	Hence, $h(\psi_n) \le n^{2+o(1)}$.
\end{proof}

\section*{Acknowledgement}

We also would like to thank the authors of~\cite{BCMOS} for sending us a preliminary version of their 
work and many very useful comments. 

We are also grateful to Joshua Ho for his permission to reproduce some parts of  his master's thesis~\cite{Ho}.

During the preparation of this work, B.K. was  supported  by Australian Research Council Grant~DP160100932 and  Academy of Finland Grant~319180, 
J.G.  
by Australian Research Council Grant~~DP180100201 and I.S.  
by Australian Research Council Grants DP170100786 and DP180100201.

\end{document}